\theoremstyle{definition}
\newtheorem{definition}{Definition}[section]
\newtheorem{ex}[definition]{Example}
\newtheorem{rem}[definition]{Remark}
\theoremstyle{plain}
\newtheorem{prop}[definition]{Proposition}
\newtheorem{coro}[definition]{Corollary}
\newtheorem{teo}[definition]{Theorem}
\newfont{\bbb}{msbm10 scaled\magstephalf}     
\def\R{\mathbb R}
\def\I{\'{\i}}
\def\R{\mbox{\bbb R}}
\def\A{\mathscr A}
\def\M{\mathcal M}
\def\S{\mathbb S}
\def\reg{\operatorname{reg}}
\def\sing{\operatorname{sing}}
\def\Hess{\operatorname{Hess}}
\def\rank{\operatorname{rank}}
\def\ker{\operatorname{ker}}
\def\I{\operatorname{I}}
\def\II{\operatorname{II}}
\def\III{\operatorname{III}}
\def\VII{\operatorname{VII}}
\title{Geometry of surfaces in $\mathbb R^5$ through projections and normal sections}
\author{J. L. Deolindo Silva, R. Oset Sinha}
\date{}
\address{Departamento de Matem\'atica.
Universidade Federal de Santa Catarina - UFSC, 89036-004 -
Blumenau-SC,  Brazil}
\email{jorge.deolindo@ufsc.br}
\address{Departament de Matem\`{a}tiques,
Universitat de Val\`encia, Campus de Burjassot, 46100 Burjassot,
Spain}
\email{raul.oset@uv.es}
\thanks{Work of J. L. Deolindo Silva partially supported by CAPES/JSPS Grant number 88887.357189/2019--00.}
\thanks{Work of R. Oset Sinha partially supported by MICINN Grant PGC2018-094889-B-I00}
\subjclass[2000]{Primary 57R45; Secondary 53A05, 58K05} \keywords{surfaces in 5-space, singular surfaces in 4-space, 3-manifolds in 6-space, projections, normal sections, umbilic curvature}
\begin{document}

\begin{abstract}
We study the geometry of surfaces in $\R^5$ by relating it to the geometry of regular and singular surfaces in $\R^4$ obtained by orthogonal projections. In particular, we obtain relations between asymptotic directions, which are not second order geometry for surfaces in $\R^5$ but are in $\R^4$. We also relate the umbilic curvatures of each type of surface and their contact with spheres. We then consider the surfaces as normal sections of 3-manifolds in $\R^6$ and again relate asymptotic directions and contact with spheres by defining an appropriate umbilic curvature for 3-manifolds.
\end{abstract}

\maketitle

\section{Introduction}

The study of the differential geometry of manifolds in Euclidean spaces of dimension greater than 3 is a classical subject. In his seminal paper \cite{Little}, Little introduced certain objects to study immersed manifolds such as the curvature ellipse for surfaces in $\R^4$. However, it's been in the last 20 years when the introduction of Singularity Theory techniques has given a tremendous impulse to the subject (see \cite{Binotto/Costa/Fuster,BruceTari,GarciaMochidaFusterRuas,MochidaFusterRuas3,MochidaFusterRuas,MochidaFusterRuas2}, for example). As the dimension and codimension of the immersed manifolds grow, deeper singularity theory concepts are needed. Besides this, the attention has recently changed to singular manifolds $M^{k}_{\sing}\subset\mathbb{R}^{n}$, $n>k\geq2$, (\cite{Benedini/Sinha/Ruas,BenediniRuasSacramento,MartinsBallesteros,SUY}) and the relation of their geometry with regular manifolds $M^{k}_{\reg}\subset\mathbb{R}^{n}$, $n>k\geq2$ (\cite{BenediniOset,BenediniOset2,BruceNogueira,BallesterosTari,Oset/Saji,OsetSinhaTari}).

Our main interest in this paper is the geometry of regular surfaces in $\R^5$, i.e. $M^{2}_{\reg}\subset\mathbb{R}^{5}$. This has been studied in \cite{Costa/Fuster/Moraes}, \cite{MochidaFusterRuas3} and \cite{Fuster/Ruas/Tari}, however, our motivation is slightly different. In \cite{BenediniOset2} a commutative diagram between regular and singular surfaces and 3-manifolds through projections and normal sections was established. This diagram induces a commutative diagram between the corresponding curvature loci. The curvature locus is the image in the normal space by the second fundamental form of the unit vectors in the tangent space and it contains all the second order geometry of the manifold at a point. This means that the second order geometry of all these manifolds is related. For example, in \cite{BenediniOset2} a relation between the asymptotic directions at a point $p\in M^n_{\reg}\subset\R^{2n}$ and at $\pi_u(p)\in M^n_{\sing}\subset\R^{2n-1}$, where $\pi_u$ is the orthogonal projection along the tangent direction $u$, is given. Namely, since the second fundamental form and the height functions coincide, a direction is asymptotic for $p$ if and only if it is asymptotic for $\pi_u(p)$. In fact, for $M^2_{\reg}\subset\R^4$ projected to $M^2_{\sing}\subset\R^3$ it is shown in \cite{BenediniOset,OsetSinhaTari} that a point $p$ is hyperbolic/elliptic/parabolic if and only if $\pi_u(p)$ is hyperbolic/elliptic/parabolic. However, for $M^2_{\reg}\subset\mathbb R^5$, the definition of asymptotic directions does not depend only on the second order geometry, and for $M^2_{\sing}\subset\mathbb R^4$ a degenerate normal direction is binormal only if it is in $E_p$, the vector space parallel to the affine space which contains the curvature locus. It is therefore not clear how to relate asymptotic directions for this situation.

In the present study we show how a point $p\in M^2_{\reg}\subset\R^5$ where the second fundamental form has rank 2 behaves like a point in a surface in $\R^4$. More precisely, there are distinguished asymptotic directions at $p$ which coincide and have the same properties as the asymptotic directions at $\pi_u(p)$, both when $u$ is a tangent direction (and the projection is a singular manifold) or a normal direction (and the projection is regular). We also establish relations between the umbilic curvatures of the surfaces in $\R^5$ and of their projected surfaces. We study the contact with spheres of $M^2_{\sing}\subset\R^4$ and show that there exists a unique umbilic focal hypersphere at $p$ if and only if there exists a unique umbilic focal hypersphere at $\pi_u(p)$.

Surfaces in $\R^5$ can also be obtained as normal sections of 3-manifolds in $\R^6$, but how to relate the geometry has not been clear until now. As for the projections, we relate asymptotic directions at a point in the 3-manifold with asymptotic directions at the corresponding point in the normal section. We define an appropriate umbilic curvature for 3-manifolds, study the contact with spheres using this invariant and relate it to the contact with spheres of the surface in $\R^5$ obtained as a normal section.

In Section \ref{prelim} we establish the notation and preliminary constructions. In Section 3 we deal with $M^2_{\reg}\subset\R^5$ projected orthogonally along a tangent direction to $M^2_{\sing}\subset\R^4$. In Section 4 we study regular projections. Section 5 establishes relations between the contact with spheres of $M^2_{\reg}\subset\R^5$ and $M^2_{\sing}\subset\R^4$. Finally, Section 6 deals with $M^2_{\reg}\subset\R^5$ obtained as a normal section of $M^3_{\reg}\subset\R^6$.

{\bf Aknowledgements:} The authors thank their families for understanding, since this work was developed mostly during confinement. The authors also thank Farid Tari for useful conversations. The first author would like to express his gratitude to the Universitat de València, where this work was partially carried out, for its hospitality.


\section{The geometry of regular surfaces in $\mathbb R^4$ and $\mathbb R^5$, regular 3-manifolds in $\mathbb R^6$, and singular surfaces in $\mathbb R^4$}\label{prelim}

In this section, we present some of the notation and basic aspects of regular surfaces in $\R^n$ ($n = 4, 5$), regular 3-manifolds in $\R^6$, and corank 1 surfaces in $\R^4$.  These subjects have been studied extensively but we mainly follow the approach in \cite{Benedini/Sinha/Ruas,Binotto/Costa/Fuster,Costa/Fuster/Moraes,dreibelbis,Livro,Little,MochidaFusterRuas3}.

\subsection{Regular $k$-dimensional manifold in $\mathbb R^n$}\label{regular}

Given a smooth surface $M^{k}_{\reg}\subset\mathbb{R}^{n}$, $n>k\geq2$ and $f:U\rightarrow\mathbb{R}^{n}$
a local parametrisation of $M^{k}_{\reg}$ with $U\subset\mathbb{R}^{2}$ an open subset, let
$\{e_{1},\ldots,e_{n}\}$ be an orthonormal frame of $\mathbb{R}^{n}$ such that at any $u\in U$,
$\{e_{1}(u),\ldots,e_{k}(u)\}$ is a basis for $T_{p}M^{k}_{\reg}$ and $\{e_{k+1}(u),\ldots,e_{n}(u)\}$ is a basis for
$N_{p}M^{k}_{\reg}$ at $p=f(u)$.

The second fundamental form of $M^{k}_{\reg}$ at a point $p$ is defined by
$II_{p}:T_{p}M^{k}_{\reg}\times T_pM^{k}_{\reg}\rightarrow N_{p}M^{k}_{\reg}$ given by
$II_p(u,w)=\pi_2(d^2f(u,w))$,
where $\pi_2:T_p\mathbb{R}^{n}\rightarrow N_pM^{k}_{\reg}$
is the canonical projection on the normal space. 
The {\it curvature locus} is the image of the unit vectors in $T_p M^k_{\reg}$ given by the
map $\eta:\S^{k-1}\subset T_p M^k_{\reg}\to N_pM^k_{\reg}$, where $\eta(u)=II_p (u,u)$. The second fundamental form of $M^k_{\reg}$ at $p$ along a normal vector
field $\nu$ is the bilinear map $II_p^\nu :T_pM^k \times T_pM^k\to \R$ given by $II_p^{\nu}(v,w)=\langle\nu, II_p(v, w)\rangle$.

\subsubsection{Case $k=2$:} Taking $w=w_{1}e_{1}+w_{2}e_{2}\in T_pM^{2}_{\reg}$ and $(x,y)$ local coordinates in $\R^2$, $l_{i}=\langle f_{xx},e_{2+i}\rangle,\ m_{i}=\langle f_{xy},e_{2+i}\rangle$
and $n_{i}=\langle f_{yy},e_{2+i}\rangle$, for $i=1,\ldots,n-2$, are called the coefficients of the second fundamental form. $II_p$ is a quadratic form which can be written as
$$
II_{p}(w,w)=\sum_{i=1}^{n-2}(l_{i}w_{1}^{2}+2m_{i}w_{1}w_{2}+n_{i}w_{2}^{2})e_{2+i}.
$$
The matrix of the second fundamental form with respect to the orthonormal frame above is given by
$$
\alpha_p=\left(
         \begin{array}{ccc}
           l_{1} & m_{1} & n_{1} \\
           \vdots & \vdots & \vdots \\
           l_{n-2} & m_{n-2} & n_{n-2}
         \end{array}
       \right).
$$

If the unit circle in $T_pM^2_{\reg}$ is parametrized by $\theta\in[0,2\pi]$, the curvature locus forms an ellipse in the normal space $N_{p}M^{2}_{\reg}$ given by the image of the map
\begin{equation}
\eta(\theta)=\sum_{i=1}^{n-2}(l_{i}\cos(\theta)^2+2m_{i}\cos(\theta)\sin(\theta)+n_{i}\sin(\theta)^2)e_{2+i}.
\end{equation}\label{ellipse}
It is denoted by $\Delta^n_e$ and called the \emph{curvature ellipse}. In particular, if $u=\cos(\theta) e_{1}+\sin(\theta) e_{2}$ belongs to unit sphere $S^{1}\subset\mathbb R^2$, we get
$II_{p}(u,u)=\eta(\theta)$.

\

\hspace{-0.4cm}{\it Case $n=4$:} The points on a surface $M^2_{\reg}\subset\R^4$ are classified according to the position of the point $p$ with
respect to the curvature ellipse $\Delta_e^4$ ($N_pM^2_{\reg}$ is viewed as an affine plane through $p$). The point $p$ is
called \emph{elliptic/parabolic/hyperbolic} if it is inside/on/outside the curvature ellipse at $p$, respectively. Moreover, $p$ is an {\it inflection point} if the curvature ellipse is a degenerate ellipse, i.e. $\rank\alpha_p=1$ (\cite{Little}). The directions along $\theta$ such that the vector $\eta(\theta)$ is tangent to the curvature ellipse $\Delta_e^4$ are called  {\it asymptotic directions} of $M_{\reg}^2$ in $\R^4$ at $p$.  There are 2/1/0/infinite asymptotic directions at $p$ depending on $p$ being a hyperbolic/parabolic/elliptic/inflection point.

\

\hspace{-0.4cm}{\it Case $n=5$:}  If $Aff_p$ denotes the affine space which contains the curvature ellipse, $E_p$ is the vector subspace parallel to $Aff_p$ in $N_pM^2_{\reg}$ and the orthogonal complement of $E_p$ in $N_pM$ is denoted by $E_p^{\perp}$.
The distance from $p$ to $Aff_p$ is the non-negative real number $\kappa_u^{\reg}$ called {\it umbilic curvature} (\cite{Costa/Fuster/Moraes}).

Consider the subsets of $M^2_{\reg}$
$$
\M_i=\{p\in M^2_{\reg}\;|\;\rank\alpha_p =i\},\; i\leq3.
$$
Points in these subspaces are called $\M_i$-points and can be characterized in terms of the relative positions of $E_p$ and $Aff_p$: If $p\in \M_3$, $Aff_p$ is a plane that does not contain the origin of $N_pM^2_{\reg}$. If $p\in \M_2$, $Aff_p$ is either a plane through the origin of $N_pM^2_{\reg}$ (and thus coincides with $E_p$), or a line that does not contain the origin of $N_pM^2_{\reg}$. It is shown in \cite{MochidaFusterRuas3} that for generically immersed surface in $\R^5$, $M^2_{\reg} = \M_3\cup \M_2$, with $\M_2$ a regular curve on $M^2_{\reg}$. When $p\in \M_1$, $Aff_p$ is a line through the origin $p$ of $N_pM^2_{\reg}$, that is $Aff_p =E_p$.

A point $p\in M^2_{\reg}$ is called {\it semiumbilic} if the curvature ellipse $\Delta_e^5$ degenerates to a line segment that does not contain $p$. In this sense, points in $\M_2$ are either semiumbilics, or points at which $Aff_p$ passes through the origin $p$ of $N_pM^2_{\reg}$. Moreover, a point $p\in\M_2$ can be classified as $\M_2^e$, $\M_2^h$ or $\M_2^p$ according to whether the origin $p$ of $N_pM^2_{\reg}$ lies inside, outside, or on the curvature ellipse $\Delta_e^5$ at $p$. In particular, semiumbilic points can be considered as points of type $\M_2^h$.

A geometrical characterization of points on $M^2_{\reg}$ using singularity theory is carried out in \cite{MochidaFusterRuas3} via the family of height functions
$h : M^2_{\reg} \times S^4 \to  \R$
 given by $h(p,\nu)=\langle p,\nu\rangle$, where $S^4$ denotes the unit sphere in ${\mathbb R}^5$.
  For $\nu$ fixed, the height function $h_\nu(p)=h(p,\nu)$ is singular if and only if $\nu\in N_pM^2_{\reg}$.
It is shown in  \cite{MochidaFusterRuas3} that for a generic surface, $p\in \M_3$ if and only if $h_\nu$ has only $A_k$-singularities for any $\nu\in N_pM^2_{\reg}$. A point $p\in \M_2^h\cup \M_2^e$ (respectively, $p\in \M_2^p$) if and only if there exists $\nu\in N_pM$ such that $h_\nu$ has a singularity of type $D_4^{\pm}$ (respectively, $D_5$) at $p$. This direction $\nu$ is called the {\it flat umbilic direction} and it is perpendicular to $E_p$.
 (\cite{MochidaFusterRuas3}, see also Remark \ref{rem1}).

The quadratic form $II^\nu_p$ can be identified with $\Hess(h_\nu)(p)$ at a $\nu\in N_pM^2_{\reg}$, up to smooth local changes of coordinate in $M^2_{\reg}$ (\cite{Binotto/Costa/Fuster,MochidaFusterRuas3,Fuster/Ruas/Tari}). 
A unit vector $\nu \in N_pM^2_{\reg}$ is called a {\it degenerate direction} if $h_\nu$ has a singularity of type $A_2$ or worse at $p$ and it is a {\it binormal direction} if the singularity is of type $A_3$ or worse at $p$. A unit vector in $u\in T_pM^2_{\reg}$ is called a {\it contact direction} associated to a degenerate direction $\nu$ if $u\in\ker(\Hess(h_\nu)(p))$. For $\M_3$-points, an {\it asymptotic direction} at $p$, is a contact direction associated to the binormal direction $\nu$. For $\M_2$-points where $h_{\nu}$ has an $A_{k\geq 3}$-singularity, any contact direction associated to $\nu$ is called asymptotic. However, when $\nu$ is a flat umbilic direction $\ker(\Hess(h_v)(p)) = T_pM^2_{\reg}$, but only certain directions are called asymptotic (see Section 3 for details). 

\subsubsection{Case $k=3$ and $n=6$:}
In this case, the curvature locus is denoted by $\Delta_v$ and is studied in \cite{Binotto/Costa/Fuster}.
Here, a direction $u\in T_pM^3_{\reg}$ is called an {\it asymptotic direction} of $M^3_{\reg}$ at $p$ if there is a non zero vector $\nu\in N_pM^3_{\reg}$ such that $II_\nu(u,v)=\langle II(u,v),\nu\rangle=0$ for all $v\in T_pM^3_{\reg}$. Moreover, in such case, we say that $\nu$ is a {\it binormal direction}. Equivalently, $u$ is asymptotic if and only if there exists $\nu\in N_pM^3_{\reg}$ such that the height function $h_\nu$ has a degenerate singularity and $u\in\ker \Hess h_\nu$ (see \cite{dreibelbis}). More details about $M^3_{\reg}$ can be found in Section \ref{sec6}.

\subsection{Singular surfaces in $\mathbb R^4$}\label{singular}

Let $M^{2}_{\sing}$ be a corank $1$ surface in $\mathbb{R}^{4}$, at $p$. The singular surface $M^{2}_{\sing}$ will be taken as the image of a smooth map $g:\tilde{M}\rightarrow \mathbb{R}^{4}$, where $\tilde{M}$ is a smooth regular surface and $q\in\tilde{M}$ is a corank $1$ point of $g$ such that $g(q)=p$. Also, consider $\phi:U\rightarrow\mathbb{R}^{2}$ a local coordinate system defined in an open neighborhood $U$ of $q$ at $\tilde{M}$. Using this construction, we may consider a local parametrisation $f=g\circ\phi^{-1}$ of $M^{2}_{\sing}$ at $p$. 


The tangent line of $M^{2}_{\sing}$ at $p$, $T_{p}M^{2}_{\sing}$, is given by $\mbox{Im}\ dg_{q}$, where $dg_{q}:T_{q}\tilde{M}\rightarrow T_{p}\mathbb{R}^{4}$ is the differential map of $g$ at $q$. The normal space of $M^{2}_{\sing}$ at $p$, $N_{p}M^{2}_{\sing}$, is the subspace satisfying $T_{p}M^{2}_{\sing}\oplus N_{p}M^{2}_{\sing}=T_{p}\mathbb{R}^{4}$.

The first fundamental form of $M^{2}_{\sing}$ at $p$, $I:T_{q}\tilde{M}\times T_{q}\tilde{M}\rightarrow \mathbb{R}$ is given by
$I(u,v)=\langle dg_{q}(u),dg_{q}(v)\rangle$, $\forall\ u,v\in T_{q}\tilde{M}$ and the second fundamental form of $M^{2}_{\sing}$ at $p$, $II:T_{q}\tilde{M}\times T_{q}\tilde{M}\rightarrow N_{p}M$ in the basis $\{\partial_{x},\partial_{y}\}$ of $T_{q}\tilde{M}$ is given by
$$
\begin{array}{c}
     II(\partial_{x},\partial_{x})=\pi_2(f_{xx}(\phi(q))),\  II(\partial_{x},\partial_{y})=\pi_2(f_{xy}(\phi(q))),\
      II(\partial_{y},\partial_{y})=\pi_2(f_{yy}(\phi(q)))
\end{array}
$$
where  $\pi_2:T_{p}\mathbb{R}^{4}\rightarrow N_{p}M^{2}_{\sing}$ is the orthogonal projection and is extended to the whole space uniquely as a symmetric bilinear map.

Given a normal vector $\nu\in N_{p}M^{2}_{\sing}$, the second fundamental form along $\nu$, $II_{\nu}:T_{q}\tilde{M}\times T_{q}\tilde{M}\rightarrow\mathbb{R}$ is given by $II_{\nu}(u,v)=\langle II(u,v),\nu\rangle$, for all $u,v\in T_{q}\tilde{M}$.
Thus, if $u=\alpha\partial_{x}+\beta\partial_{y}\in T_{q}\tilde{M}$ and
fixing an orthonormal frame $\{\nu_{1},\nu_2,\nu_{3}\}$ of $N_{p}M^{2}_{\sing}$,
$$
\begin{array}{c}\label{eq.2ff}
II(u,u) =\displaystyle\sum_{i=1}^{3}II_{\nu_{i}}(u,u)\nu_{i}=\sum_{i=1}^{3}(\alpha^{2}l_{\nu_{i}}(q)+2\alpha\beta m_{\nu_{i}}(q)+\beta^{2}n_{\nu_{i}}(q))\nu_{i}. \\
\end{array}
$$

Let $C_{q}\subset T_{q}\tilde{M}$ be the subset of unit tangent vectors and let $\eta:C_{q}\rightarrow N_{p}M$ be the map given by $\eta(u)=II(u,u)$. The \emph{curvature parabola} of $M^{2}_{\sing}$ at $p$, denoted by $\Delta_{p}$, is the subset $\eta(C_q)$. The curvature parabola is a plane curve, and it can degenerate into a half-line, a line or even a point.
This special curve plays a similar role as the curvature ellipse $\Delta_e^4$ does for regular surfaces in $\R^4$. In fact, in \cite{BenediniOset} there is a relation between them.


The minimal affine space which contains the curvature parabola is denoted by $Aff_{p}$. The plane denoted by $E_{p}$ is the vector space: parallel to $Aff_{p}$ when $\Delta_{p}$ is a non degenerate parabola, the plane through $p$ that contains $Aff_{p}$ when $\Delta_{p}$ is a non radial half-line or a non radial line and any plane through $p$ that contains $Aff_{p}$ when $\Delta_{p}$ is a radial half-line, a radial line or a point. The non negative real number $\kappa_u^{\sing}(p)=d(p,Aff_p)$ is called {\it umbilic curvature}.

A non-zero direction $u\in T_{q}\tilde{M}$ is called {\it asymptotic} if there is a non-zero vector $\nu\in E_{p}$ such that
$II_{\nu}(u,v)=\langle II(u,v),\nu\rangle=0$ for all $v\in T_{q}\tilde{M}$.
In such case,  $\nu$ is said be a {\it binormal direction}. A point in $M^{2}_{\sing}$ is {\it hyperbolic/elliptic/parabolic/inflection} if it has 2/0/1/infinte asymptotic directions.

The normal directions $\nu\in N_{p}M^{2}_{\sing}$ which are not in the plane $E_p$ but also satisfy the condition $II_{\nu}(u,v)=\langle II(u,v),\nu\rangle=0$, are called {\it degenerate directions}. The subset of degenerate directions in $N_pM^2_{\sing}$
is a cone and the binormal directions are those in the intersection of this cone with $E_p$.

It is possible to take a coordinate system $\phi$ and make rotations in the target in order to obtain
$f(x,y)=g\circ\phi^{-1}(x,y)=(x,f_{2}(x,y),f_{3}(x,y),f_{4}(x,y))$,
where $\frac{\partial f_{i}}{\partial x}(\phi(q))=\frac{\partial f_{i}}{\partial y}(\phi(q))=0$ for $i=2,3,4$. Taking an orthonormal frame $\{\nu_{2},\nu_3,\nu_{4}\}$ of $N_{p}M^2_{\sing}$, the curvature parabola $\Delta_{p}$ can be parametrized by
$$
\eta(y)=\sum_{i=2}^{4}(l_{\nu_{i}}+2m_{\nu_{i}}y+n_{\nu_{i}}y^{2})\nu_{i}.
$$
where $y\in\mathbb{R}$ corresponds to a unit tangent direction $u=(1,y)\in C_{q}$.


A point $p$ belongs to the subspace $\M_i$ if the rank of the second fundamental form at $p$ is $i$, $i = 0, 1, 2, 3.$ Following \cite{Benedini/Sinha/Ruas}, when $\Delta_p$ is a non degenerate parabola, $p\in\M_2$ or $p\in \M_3$ according to
$Aff_p = E_p$ or not, respectively. In particular,  $p\in\M_3$ iff $\kappa^{\sing}_u(p) \neq0$. If $\Delta_p$ is a half-line or a line, $p\in \M_1$ or $p\in\M_2$ depending on $\Delta_p$ being radial or not, respectively. Here, $p\in\M_2$ iff $\kappa^{\sing}_u(p) \neq0$. Finally,  when $\Delta_p$ is a point, $p\in \M_0$ or $p\in\M_1$ according to $\Delta_p$ is $p$ or not, respectively. In this case, $p\in\M_1$ iff $\kappa^{\sing}_u(p) \neq0$.

\section{Projecting surfaces in $\mathbb R^5$ along a tangent direction}

Let $p\in M_{\reg}^2$ be a point in a smooth surface in $\R^5$ and consider $\pi_u:\mathbb R^5\to\mathbb R^4$ the orthogonal projection along a direction $u\in T_pM_{\reg}^2$ to $\mathbb R^4$. The projection $\pi_u(M_{\reg}^2)$ is a singular surface $M_{\sing}^2\subset\mathbb R^4$.

It is natural to ask whether there is a relation between the umbilic curvatures $\kappa_u^{\reg}$ and $\kappa_u^{\sing}$.

\begin{ex}
Consider a regular surface $M_{\reg}^2$ whose 2-jet is given by $(x, y, x^2, 2xy, y^2)$ and $(X,Y,Z)$ the coordinates of $N_pM_{\reg}^2$. Its curvature ellipse is given by $\eta_e(\theta)=2\big(\cos^2(\theta),\sin(\theta)\cos(\theta),\sin^2(\theta))$. Here $Aff_p$ is the plane $X+Z=2$
and, consequently $k^{\reg}_u(p)=2\sqrt{2}$. When projecting the surface along the tangent direction $u=(0,1)$ we obtain a singular surface whose 2-jet of the parametrisation is $\mathscr A^2$-equivalent to $(x, xy, y^2, 0)$ (i.e. equivalent by 2-jets of smooth changes of coordinates in the source and target) and the curvature parabola is given by
$\eta_p(y)=2\big(1,y,y^2)$. Here $Aff_{\pi_u(p)}$ is the plane $X=2$ and $k^{\sing}_u(\pi_u(p))=2$, so $Aff_p\neq Aff_{\pi_u(p)}$ and $\kappa_u^{\reg}\neq\kappa_u^{\sing}$.
\end{ex}

\begin{definition}
A point $q\in M_{\sing}^2\subset\mathbb R^4$ is called semiumbilic if the curvature parabola $\Delta_p$ is a half-line or a line which is not radial.
\end{definition}

\begin{prop}\label{semiumb}
Let $p\in M^{2}_{\reg}\subset\mathbb{R}^{5}$, $u\in T_p M^2_{\reg}$ and $\pi_u$ be the orthogonal projection such that $\pi_u(p)\in M^2_{\sing}\subset\mathbb R^4$. Let $\kappa_u^{\reg}$ and $\kappa_u^{\sing}$ be the corresponding umbilic curvatures.
\begin{itemize}
\item[i)] If $p\in \M_3$, then $\kappa_u^{\reg}\kappa_u^{\sing}\neq 0$.
\item[ii)] If $p\in \M_2$, the following are equivalent
\begin{itemize}
\item[1)] $\kappa_u^{\reg}=\kappa_u^{\sing}=0$,
\item[2)] $\Delta_e^5$ and $\Delta_p$ are non-degenerate,
\item[3)] Both $p$ and $\pi_u(p)$ are not semiumbilic.
\end{itemize}
The following are equivalent too
\begin{itemize}
\item[1')] $\kappa_u^{\reg}\kappa_u^{\sing}\neq 0$
\item[2')] $\Delta_e^5$ and $\Delta_p$ are degenerate (not a point),
\item[3')] Both $p$ and $\pi_u(p)$ are semiumbilic.
\end{itemize}
\end{itemize}
\end{prop}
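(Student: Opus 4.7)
The plan is to reduce both parts to the rank of the second fundamental form and then invoke the $\M_i$-characterisations of Section \ref{prelim}. First I would fix local coordinates $(x,y)$ on $M^2_{\reg}$ with $u=e_2$ and an adapted orthonormal frame so that $f(x,y)=(x,y,f_1,f_2,f_3)$ with each $f_i$ of order $\geq 2$ at the origin, and set $L=(l_1,l_2,l_3)$, $M=(m_1,m_2,m_3)$, $N=(n_1,n_2,n_3) \in \R^3\cong N_pM^2_{\reg}$. The curvature ellipse then reads
\[
\eta_e(\theta)=\cos^2\theta\, L + 2\sin\theta\cos\theta\, M + \sin^2\theta\, N,
\]
with affine hull $Aff_p = \tfrac{1}{2}(L+N)+\mathrm{span}(L-N,M)$. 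Projecting along $u=e_2$ produces $\tilde f(x,y)=(x,f_1,f_2,f_3)$ parametrising $M^2_{\sing}$; identifying $N_{\pi_u(p)}M^2_{\sing}$ with the same $\R^3$ (the last three coordinates) and using the formulas of Section \ref{singular}, one obtains $II(\partial_x,\partial_x)=L$, $II(\partial_x,\partial_y)=M$, $II(\partial_y,\partial_y)=N$, hence $\eta_p(y)=L+2yM+y^2N$ and $Aff_{\pi_u(p)}=L+\mathrm{span}(M,N)$.

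The key observation is that the second fundamental forms at $p$ and at $\pi_u(p)$ carry the same matrix $[L\mid M\mid N]$, so they have equal rank, and therefore $p\in\M_i$ if and only if $\pi_u(p)\in\M_i$ for every $i$ (independence of the choice of $u$ follows because a change of tangent basis acts linearly on $\{L,M,N\}$ without altering their span). Part (i) is then immediate: for $p\in\M_3$ the vectors $L,M,N$ are linearly independent in $\R^3$, so the $\M_3$-characterisation of Section \ref{regular} yields $\kappa_u^{\reg}\neq 0$; by rank invariance $\pi_u(p)\in\M_3$, and the corresponding characterisation of Section \ref{singular} gives $\kappa_u^{\sing}\neq 0$.

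For part (ii), with $p\in\M_2$ (and hence $\pi_u(p)\in\M_2$), I would derive the two individual equivalences
\[
\kappa_u^{\reg}=0 \iff \Delta_e^5 \text{ non-degenerate} \iff p \text{ not semiumbilic},
\]
\[
\kappa_u^{\sing}=0 \iff \Delta_p \text{ non-degenerate} \iff \pi_u(p) \text{ not semiumbilic},
\]
which rest on the $\M_2$-characterisations: for $p\in\M_2$ the only alternatives for $Aff_p$ are a plane through the origin (non-degenerate ellipse, $\kappa_u^{\reg}=0$, not semiumbilic) or a line off the origin (degenerate segment, $\kappa_u^{\reg}\neq 0$, semiumbilic), and analogously for $\pi_u(p)\in\M_2$ the parabola is either non-degenerate with $Aff_{\pi_u(p)}=E_{\pi_u(p)}$, or a non-radial line/half-line. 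Taking conjunctions of these two separate equivalences yields the joint equivalences (1)$\Leftrightarrow$(2)$\Leftrightarrow$(3) and (1')$\Leftrightarrow$(2')$\Leftrightarrow$(3'). The genuinely substantive ingredient is the rank invariance under projection along a tangent direction; everything else is a matter of unwinding the preliminaries, and I do not anticipate further calculational obstacles.
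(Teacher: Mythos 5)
Your proposal is correct and follows essentially the same route as the paper: the paper's proof also rests on the observation that $p\in\M_i$ if and only if $\pi_u(p)\in\M_i$ (which you justify explicitly via the shared matrix $[L\mid M\mid N]$) and then reads off both parts from the $\M_3$- and $\M_2$-classifications of $Aff_p$ and $Aff_{\pi_u(p)}$ recalled in the preliminaries. Your write-up merely fills in the details the paper leaves implicit, in particular the identification of the two second fundamental forms and the explicit case split for $\M_2$-points.
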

\begin{proof}
First of all notice that $p\in \M_i$ if and only if $\pi_u(p)\in \M_i$, $i=0,\ldots,3$.

For an $\M_3$-point, $Aff_p$ and $Aff_{\pi_u(p)}$ are planes that do not contain the origin, so $\kappa_u^{\reg}$  and $\kappa_u^{\sing}$ are non zero. Item ii) follows directly from the considerations in Section \ref{prelim} about the types of ellipses/parabolas in $\M_2$-points and the definition of umbilic curvature.
\end{proof}

Asymptotic directions on $M^2_{\reg}\subset\mathbb R^{5}$ can be described via the $\A$-singularities of the projections of $M^2_{\reg}$ to 4-spaces (i.e. under the action of the group $\A$). If $TS^4$ denotes the tangent bundle of the $4$-sphere $S^4$, the family of projections to
$4$-planes is given by
$$
\begin{array}{rcl}
\Pi : M^2_{\reg} \times S^4& \to & TS^4\\
 (p, u)&\mapsto& (u,\pi_u (p))
 \end{array}
$$
where $\pi_u(p)=p-\langle p,u\rangle u$. For a given $u \in S^4$, the map $\pi_u$ can be considered locally as a germ of a smooth map $\R^2, 0\to \R^4, 0$. 
Then the generic $\A$-singularities of $\pi_{u}$ are those that have $\A_e$-codimension less than or equal to 4 (which is the dimension of $S^4$). These are listed in Table 1 (see \cite{Rieger}).

\begin{table}[htp]\label{singPro} \caption{Local singularities of projections of surfaces in $\R^5$ to $4$-spaces.}
\begin{center}
\begin{tabular}{ccc}
\hline
     Type & Normal form  & $\A_e$-codimension\\
\hline
Immersion &  $(x, y, 0, 0)$ & $0$\\
$\I_k$ & $(x,xy,y^2,y^{2k+1})$, $k = 1,2,3,4$ & $k$\\
$\II_2$ & $(x,y^2,y^3,x^ky)$, $k = 2$ & $3$\\
$\III_{2,3}$ & $(x,y^2,y^3\pm x^ky,x^ly)$, $k=2,l=3$ & $4$\\
$\VII_1$ &   $(x,xy,xy^2\pm y^{3k+1},y^3)$, $k=1$ & $4$\\
\hline
\end{tabular}
\end{center}
\end{table}

Following \cite{Fuster/Ruas/Tari}, for a generic surface $M^2_{\reg}\subset\R^5$, a direction $u\in T_pM^2_{\reg}$, with $p\in \M_3$, is  asymptotic if and only if the projection of $M^2_{\reg}$ along $u$ to a transverse $4$-space has an $\A$-singularity of type $\I_2$ or worse. At generic $\M_3$-points, there are at most $5$ and at least $1$ asymptotic directions $u$ where $\pi_u$ has an $\A$-singularity of type $\I_2$, which correspond to $A_{k\geq3}$-singularities of the height functions $h_\nu$ along the associated binormal direction $\nu$. If $p$ is a generic $\M_2$-point, then there at most $3$ and at least $1$ asymptotic directions where $\pi_u$ has an $\A$-singularity of type $\I_2$. These are dual to the flat umbilic normal direction. There are also two special asymptotic directions (respectively, none) where $\pi_u$ has an $\A$-singularity of type $\II_2$ if $p\in \M_2^h$ (respectively, $p\in \M_2^e$), and
one direction where $\pi_u$ has an $\A$-singularity of type $\VII_1$ if $p\in \M_2^p$.  These directions correspond to $A_{k\geq3}$-singularity of the height functions along the binormal direction associated to $u$.

\begin{prop}
For $\M_3$-points $\pi_u(p)$ is a singularity of type $I_k$ (even for non generic surfaces). In particular, $u$ is asymptotic if and only if $\pi_u(p)$ is a singularity of type $I_k$, $k>1$.
\end{prop}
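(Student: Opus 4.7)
The plan is to take a Monge-type local parametrization at $p$, write down the projection explicitly, and then reduce the 2-jet by $\A^2$-changes in the target using the rank-$3$ hypothesis. The classification of germs $\R^2,0\to\R^4,0$ from \cite{Rieger} tells us that the $I_k$-series is precisely the set of germs whose 2-jet is $\A^2$-equivalent to $(x,xy,y^2,0)$, so it suffices to bring the 2-jet of $\pi_u\circ f$ to this normal form.

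More concretely, I would choose orthonormal frames so that $f(x,y)=(x,y,f_3,f_4,f_5)$ with $f_i$ vanishing to order $2$ at the origin, and so that the tangent projection direction is $u=e_2$. The projection then reads
\[
\pi_u\circ f(x,y)=(x,f_3(x,y),f_4(x,y),f_5(x,y)),
\]
whose $2$-jet is $(x,Q_1,Q_2,Q_3)$ with $Q_i=l_ix^2+2m_ixy+n_iy^2$ the quadratic forms associated with the three rows of $\alpha_p$. Target changes of the form $X_{i+1}\mapsto X_{i+1}-l_iX_1^2$ eliminate the $x^2$-terms, leaving the $2$-jet
\[
\bigl(x,\; 2m_1xy+n_1y^2,\; 2m_2xy+n_2y^2,\; 2m_3xy+n_3y^2\bigr).
\]
The heart of the argument is the observation that, for $p\in\M_3$, the matrix $\alpha_p$ is invertible, so in particular its second and third columns $(m_1,m_2,m_3)^{T}$ and $(n_1,n_2,n_3)^{T}$ are linearly independent in $\R^3$. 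Consequently the three quadratic forms $2m_ixy+n_iy^2$ span the two-dimensional space $\langle xy,y^2\rangle$, and a suitable linear change of the last three target coordinates brings the $2$-jet to $(x,xy,y^2,0)$.

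By the classification recalled in Table~\ref{singPro}, this $2$-jet characterizes the $I_k$-series; hence $\pi_u(p)$ is a singularity of type $I_k$ regardless of genericity assumptions on $M^2_{\reg}$. The ``in particular'' part is then immediate from the characterization of asymptotic directions at $\M_3$-points recalled just before the proposition (following \cite{Fuster/Ruas/Tari}): $u$ is asymptotic if and only if $\pi_u$ has an $\A$-singularity of type $I_2$ or worse, that is $I_k$ with $k>1$.

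The only potential obstacle is the verification that the rank-$3$ condition on $\alpha_p$ translates into the linear-independence statement needed after removing the $x^2$-terms; but since invertibility of a $3\times3$ matrix forces every pair of its columns to be linearly independent, this step is essentially immediate.
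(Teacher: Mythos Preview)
Your argument is correct and, in fact, more self-contained than the paper's. Both proofs arrive at the same key claim --- that the $2$-jet of $\pi_u\circ f$ is $\A^2$-equivalent to $(x,xy,y^2,0)$ --- and both finish by observing that this $2$-jet class picks out the $I_k$-series from Table~\ref{singPro}. The difference lies in how that $2$-jet normal form is reached: the paper routes the argument through the curvature parabola of the singular projection, invoking that $p\in\M_3$ forces $\Delta_p$ to be non-degenerate and then quoting Theorem~3.6 of \cite{Benedini/Sinha/Ruas}, which characterises non-degeneracy of $\Delta_p$ by the $2$-jet $(x,y^2,xy,0)$. You instead perform the $\A^2$-reduction by hand, using only the linear-algebra fact that invertibility of $\alpha_p$ forces its last two columns to be independent. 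Your route avoids the external reference and the curvature-parabola machinery entirely, at the cost of a short explicit computation; the paper's route is shorter on the page but opaque without the cited result.

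One small caveat worth noting (which applies equally to the paper's version): the sentence ``the $I_k$-series is precisely the set of germs whose $2$-jet is $\A^2$-equivalent to $(x,xy,y^2,0)$'' is a mild overstatement, since Rieger's list in \cite{Rieger} classifies only germs of bounded $\A_e$-codimension. What is true is that every germ with this $2$-jet is $\A$-equivalent to $(x,xy,y^2,g(y))$ for some $g$ of order~$\ge 3$, hence lies in the $I_k$-stratum (with $k$ possibly infinite for non-generic surfaces). This is the intended meaning of ``type $I_k$'' in the statement, and both your proof and the paper's rely on it implicitly.
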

\begin{proof}
For generic surfaces this is proved in Theorem 3.6 in \cite{Fuster/Ruas/Tari} by direct computation. If $p$ and $\pi_u(p)$ belong to $\M_3$,  then $\Delta^5_e$ and $\Delta_p$ are a non-degenerate ellipse and parabola, respectively. By Theorem 3.6 in \cite{Benedini/Sinha/Ruas}, the curvature parabola for $\pi_u(p)$ is non-degenerate if and only if $j^2\pi_uf(0)\sim_{\mathscr A^2}(x,y^2,xy,0)$, and the only singularities in Table \ref{singPro} in this 2-jet class are the $I_k$ singularities. The rest follows by Theorem 3.6 in \cite{Fuster/Ruas/Tari}.
\end{proof}

\begin{definition}
For $M^2_{\reg}\subset\R^5$ a direction $u\in T_pM^2_{\reg}$ is called $A_k$-asymptotic if it is a contact direction associated to a binormal direction $\nu$ such that $h_{\nu}$ has an $A_{k\geq 3}$-singularity. If it is associated to a binormal direction such that $h_{\nu}$ has an $D_k$-singularity (the flat umbilic) then it is called $D_k$-asymptotic.
\end{definition}

With this definition, for $p\in \M^h_2$ the $D_k$-asymptotic directions $u$ are those such that $\pi_u(p)$ has an $I_k$-singularity and the $A_k$-asymptotic directions are those such that $\pi_u(p)$ is an $II_2$-singularity.

\begin{rem}\label{rem1}
Following \cite{Costa/Fuster/Moraes}, the cone of degenerate directions $\mathcal C_p$ at a point $p\in \M^h_2$ is two planes which intersect in a line which is precisely $\ker \alpha_p\subseteq E_p^{\perp}$. At a point $p\in \M^p_2$ it is a plane containing $\ker \alpha_p$ and at a point $p\in \M^e_2$ it is exactly $\ker \alpha_p$. By Corollary 6 in \cite{MochidaFusterRuas3} the flat umbilic direction is perpendicular to $E_p$ and lies in $\ker \alpha_p$.  All the degenerate directions in a given plane of $\mathcal C_p$ which are not the flat umbilic have a same contact direction. This is because to be a degenerate direction we need that $h_{\nu}$ has a singularity of type $A_2$ or worse, i.e. $\det\Hess(h_{\nu})=0$, and to be the associated contact direction we need that $u\in\ker(\Hess(h_\nu))$. All those degenerate directions have the same Hessian, but only one will be binormal. This means that the unique contact direction associated to all the non-flat umbilic degenerate directions in a given plane of $\mathcal C_p$ is an $A_k$-asymptotic direction.
\end{rem}

\begin{prop}\label{lemma1}
Let $p\in \M_2$. The unitary tangent vector $u$ is an $A_k$-asymptotic direction if and only if $\eta_e(u)$ is tangent to $\Delta_e^5$.
\end{prop}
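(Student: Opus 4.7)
The plan is to use the flat umbilic direction to adapt the normal frame so that $\alpha_p$ becomes a $2\times 3$ block, and then to translate both sides of the equivalence into one and the same $2\times 2$ determinantal identity.

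Since $p\in\M_2$, $\rank\alpha_p=2$, and by Remark \ref{rem1} the flat umbilic direction lies in $\ker\alpha_p$ and is perpendicular to $E_p$. I choose an orthonormal frame $\{e_3,e_4,e_5\}$ of $N_pM^2_{\reg}$ with $e_5$ equal to the flat umbilic direction. Then the $e_5$-row of $\alpha_p$ vanishes, $II_p(v,w)\in \mathrm{span}\{e_3,e_4\}$ for all $v,w\in T_pM^2_{\reg}$, and $\Delta_e^5\subset \mathrm{span}\{e_3,e_4\}$. Writing $H_i=\Hess(h_{e_i})$ for $i=3,4$, one has $\Hess(h_\nu)=a_3H_3+a_4H_4$ for any $\nu=a_3e_3+a_4e_4+a_5e_5$, independent of $a_5$.

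By Remark \ref{rem1}, $u$ is $A_k$-asymptotic if and only if there exists a non-flat-umbilic degenerate direction $\nu$ with $u\in\ker\Hess(h_\nu)$. In the chosen frame this becomes: there exist $(a_3,a_4)\neq(0,0)$ with $a_3H_3u+a_4H_4u=0$; equivalently, $H_3u$ and $H_4u$ are linearly dependent vectors of $T_pM^2_{\reg}$. Decomposing them in the orthonormal basis $(u,u^\perp)$ of $T_pM^2_{\reg}$ and setting $a=II_p^{e_3}(u,u)$, $b=II_p^{e_4}(u,u)$, $c=II_p^{e_3}(u,u^\perp)$, $d=II_p^{e_4}(u,u^\perp)$, the linear dependence is exactly $ad-bc=0$.

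For the tangency side, parametrise the unit circle by $v(\theta)=\cos\theta\,e_1+\sin\theta\,e_2$ and let $\theta_0$ satisfy $v(\theta_0)=u$. Then $\eta_e(u)=a\,e_3+b\,e_4$ and, using $v'(\theta_0)=u^\perp$, $\eta_e'(\theta_0)=2\,II_p(u,u^\perp)=2(c\,e_3+d\,e_4)$. Hence $\eta_e(u)$ being tangent to $\Delta_e^5$ at this point, that is $\eta_e(u)\parallel\eta_e'(\theta_0)$ in $\mathrm{span}\{e_3,e_4\}$, is again $ad-bc=0$, matching the previous condition. The main point to verify is that the frame construction and the determinantal identity work uniformly across all sub-cases of $\M_2$, including the semiumbilic case in which $\Delta_e^5$ degenerates to a segment and the tangency must be read as the algebraic identity $\eta_e(u)\wedge\eta_e'(\theta_0)=0$ in the plane $\mathrm{span}\{e_3,e_4\}$.
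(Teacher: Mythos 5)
Your proof is correct and follows essentially the same route as the paper's: both sides of the equivalence are reduced to the linear dependence of $II_p(u,u)$ and $II_p(u,u^{\perp})$ inside the two-dimensional image of the second fundamental form, this is identified with the existence of a degenerate direction with nontrivial component in $E_p$ whose Hessian annihilates $u$, and Remark \ref{rem1} is invoked to conclude that such a contact direction is precisely an $A_k$-asymptotic direction. Your adapted frame and the determinant $ad-bc$ are just a coordinate rendering of the paper's argument via $\ker II_p(u,\cdot)$; if anything, you are slightly more explicit about the semiumbilic subcase, which the paper's proof tacitly sets aside by assuming $\Delta_e^5$ non-degenerate.
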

\begin{proof}
If $p\in \M_2^h$ (resp. $\M_2^e$ and $ \M_2^p$) then $\Delta_e^5$ is a non-degenerate ellipse with $p$ lying outside the ellipse (resp. inside and on the ellipse), and $Aff_p=E_p$ is a plane passing through the origin that contains the image of the second fundamental form.

Let $S^{1}\subset T_pM^2_{\reg}$ be the unit sphere parametrized by $\theta$ and $u=u(\theta)$. We have $\frac{\partial\eta_e(u)}{\partial\theta}=II(u, u)_{\theta} = 2II(u, u_{\theta})$. Since $\{u,u_\theta\}$ is linearly independent, the tangency happens if and only if $\{II(u, u),2II(u, u_{\theta})\}$ is linearly dependent. This happens if and only if there exists a non zero $w\in T_pM^2_{\reg}$ such that $II(u,w)=0$, i.e. $w\in\ker II(u,\cdot)$. This means that the image of the map $II(u,\cdot)$ lies in a line in $E_p$ and so there exists $\nu\in E_p$ such that $\langle II(u,w),\nu\rangle=0$ for all $w\in T_pM^2_{\reg}$, and this is equivalent to $u\in\ker(\Hess(h_\nu))$ with $\nu\in E_p$.

We have proved that $\eta_e(u)$ is tangent to $\Delta_e^5$ if and only if there exists a degenerate direction $\nu\in E_p$ such that $u$ is its associated contact direction. By the above remark, $u$ is in fact an $A_k$-asymptotic direction and $\nu$ is the intersection of $\mathcal C_p$ with $E_p$.
\end{proof}

\begin{rem}
In this sense, the $A_k$-asymptotic directions at $\M_2$-points play the role of asymptotic directions at points in regular surfaces in $\R^4$ (this will be studied further in the next section). The cone $\mathcal C_p$ intersects the plane $E_p$ in exactly two directions which are orthogonal to the tangent rays through the origin to the curvature ellipse ($K_p$ in \cite{Costa/Fuster/Moraes}). The $A_k$-asymptotic directions are dual to the two directions in the intersection of $\mathcal C_p$ with $E_p$ and, similarly to the case of surfaces in $\mathbb R^4$, by the Proposition \ref{lemma1}, the image of the asymptotic directions by the second fundamental form is tangent to the curvature ellipse, i.e. these images are precisely the rays $K_p$.
\end{rem}

\begin{teo}\label{projgeneric} Suppose $M_{\reg}^2$ is a generic surface, then
\begin{itemize}
\item[i)] $p\in \M_2^h$ if and only if $\pi_u(p)$ is a hyperbolic point,
\item[ii)] $p\in \M_2^e$ if and only if $\pi_u(p)$ is an elliptic point,
\item[iii)] $p\in \M_2^p$ if and only if $\pi_u(p)$ is a parabolic point.
\end{itemize}
Moreover, $v$ is an $A_k$-asymptotic direction at $p$ if and only if $v$ is asymptotic for $\pi_u(p)$. That is, the two (resp. one) $A_k$-asymptotic directions for a point $p\in \M_2^h$ (resp. $p\in \M_2^p$) are exactly the only two (resp. one) asymptotic directions at $\pi_u(p)$.
\end{teo}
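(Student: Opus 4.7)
The plan is to reduce both the $A_k$-asymptotic condition at $p$ and the asymptotic condition at $\pi_u(p)$ to the same algebraic condition on the second fundamental form, and then deduce (i)--(iii) from the standard count of tangent rays from the origin to a conic.

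First, I would fix adapted coordinates with $p=0$ and $u=\partial_y$, writing $M^2_{\reg}$ as $f(x,y)=(x,y,a(x,y),b(x,y),c(x,y))$ with $a,b,c$ vanishing to order 2, so that $\pi_u\circ f=(x,a,b,c)$ parametrises $M^2_{\sing}\subset\R^4$ near $\pi_u(p)$. The projection $\pi_u$ restricts to a linear isomorphism $N_pM^2_{\reg}\to N_{\pi_u(p)}M^2_{\sing}$, and under this identification the regular and singular second fundamental forms share the same coefficient matrix $\alpha_p$, with column vectors $L,M,N$ given by the Hessians of $a,b,c$. In the $\M_2^{h/e/p}$ cases $\rank\alpha_p=2$ and $Aff_p$ passes through the origin, so $E_p=\operatorname{span}(L,M,N)=V=E_{\pi_u(p)}$ is a 2-dimensional subspace.

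For a tangent vector $v=\alpha\partial_x+\beta\partial_y$, the image of $\II(v,\cdot)\colon T\to V$ equals $\operatorname{span}(\alpha L+\beta M,\,\alpha M+\beta N)$. By Proposition \ref{lemma1}, $v$ is $A_k$-asymptotic at $p$ iff this image has dimension at most $1$ in $V$. The key step will be to show that on the singular side $v$ is asymptotic at $\pi_u(p)$ iff there is a nonzero $\nu\in E_{\pi_u(p)}=V$ perpendicular to both $\alpha L+\beta M$ and $\alpha M+\beta N$; since $V$ is 2-dimensional, such a $\nu$ exists iff these two vectors are linearly dependent in $V$, i.e.\ iff the image is again at most 1-dimensional. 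The two conditions coincide, and this yields the claimed bijection between $A_k$-asymptotic directions at $p$ and asymptotic directions at $\pi_u(p)$.

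For (i)--(iii), I would observe that $\eta_e(\theta)=\cos^2\theta\,\eta_p(\tan\theta)$, so in $V$ the curvature ellipse and the curvature parabola share the same rays from the origin (except the ray $\theta=\pi/2$, which is $u$ itself and which for generic $M^2_{\reg}$ is not $A_k$-asymptotic). For a non-degenerate ellipse the number of tangent rays from the origin is $2$, $0$ or $1$ according as the origin lies outside, inside or on the ellipse, i.e.\ as $p\in\M_2^h$, $\M_2^e$ or $\M_2^p$; by the bijection above $\pi_u(p)$ has correspondingly $2$, $0$ or $1$ asymptotic directions, and is therefore hyperbolic, elliptic or parabolic by the classification of singular surfaces in Section \ref{prelim}. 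The main obstacle I expect is the singular-case algebraic characterisation of asymptoticity, which hinges on the constraint $\nu\in E_{\pi_u(p)}$ and on the identification $V=E_{\pi_u(p)}$ made possible by $p\in\M_2^{h/e/p}$; genericity of $M^2_{\reg}$ is used to exclude semiumbilic degenerations and to ensure that $u$ itself is not an $A_k$-asymptotic direction, so that no direction is lost in the correspondence.
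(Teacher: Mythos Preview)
Your argument is essentially correct and somewhat more streamlined than the paper's. The paper proves (i)--(iii) by writing the curvature ellipse and parabola explicitly in an adapted frame and comparing the \emph{resultant} of their two quadratic components (the classical discriminant for hyperbolic/elliptic/parabolic points of surfaces in $\R^4$); it then treats separately the case where $u$ itself is $A_k$-asymptotic, invoking the $\A$-classification of $\pi_u$ (types $\II_2$, $\VII_1$) together with results from \cite{Benedini/Sinha/Ruas,Fuster/Ruas/Tari} to identify the degenerate $\Delta_p$ as a non-radial half-line or line and hence determine the point type. The ``Moreover'' clause in the paper is close to yours: it uses $E_p=E_{\pi_u(p)}$ and the fact that the height functions (hence their Hessians) coincide. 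Your route---first establish the bijection of asymptotic directions via the rank of $II(v,\cdot)$ using Proposition~\ref{lemma1}, then read off (i)--(iii) from the count---avoids both the resultant computation and the case split on $u$.

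One correction is needed in your last paragraph. Genericity of $M^2_{\reg}$ constrains the surface, not the direction of projection; the theorem is asserted for \emph{every} tangent $u$, including the $A_k$-asymptotic ones, so you cannot use genericity to ``ensure that $u$ itself is not an $A_k$-asymptotic direction''. Fortunately your bijection does not require this. The only point to verify is that the identification $E_{\pi_u(p)}=V=\operatorname{span}(L,M,N)$ persists when $\Delta_p$ degenerates, which happens precisely when $u$ is $A_k$-asymptotic (i.e.\ when $M$ and $N$ are linearly dependent). In that case $\Delta_p$ is a line or half-line contained in the $2$-plane $V$; since $\rank\alpha_p=2$ it is non-radial, so by the definition in Section~\ref{singular} the plane $E_{\pi_u(p)}$ is the vector plane through the origin containing it, which is $V$. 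With this checked, your count works uniformly for all $u$, and the $\theta=\pi/2$ caveat can simply be dropped.
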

\begin{proof}
We consider $M_{\reg}^2\subset\R^5$ parametrized in Monge form, i.e. $f(x,y)=\\(x,y,g_{1}(x,y),g_{2}(x,y),g_{3}(x,y))$,
where $\frac{\partial g_{i}}{\partial x}(p)=\frac{\partial g_{i}}{\partial y}(p)=0$ for $i=1,2,3$. By rotation in the normal space generated by $[\nu_3,\nu_4,\nu_5]$ we can take $Aff_p$ to be the plane $\nu_5=0$. The 2-jet of the parametrization now has the form $(0,0,\frac{1}{2}(a_{20}x^2+2a_{11}xy+a_{02}y^2),\frac{1}{2}(b_{20}x^2+2b_{11}xy+b_{02}y^2),0)$. The curvature ellipse $\Delta_e^5$ can be parametrized by
\begin{equation}\label{EllipseCurv}
\begin{array}{rcl}
\eta_e(\theta)&=&\big(a_{20}\cos(\theta)^2+2a_{11}\sin(\theta)\cos(\theta)+a_{02}\sin(\theta)^2,\\
&&b_{20}\cos(\theta)^2+2b_{11}\sin(\theta)\cos(\theta)+b_{02}\sin(\theta)^2,0 \big).
\end{array}
\end{equation}
Now, by rotation in the tangent space we can take $u$ to be $(0,1)$. These rotations will affect the projection but the topological type and position with respect to the origin of the curvature parabola will remain the same. By \cite[ Proposition 3.8]{BenediniOset}, the curvature parabola of the projection along the tangent direction $(0,1)$ is parametrized by
$$\eta_p(y)=(a_{20}+2a_{11}y+a_{02}y^2,b_{20}+2b_{11}y+b_{02}y^2,0).$$

Suppose first that $u$ is a non-asymptotic direction or a $D_k$-asymptotic direction, i.e. an asymptotic direction such that the projection $\pi_u(p)$ is an $\I_k$ singularity. Then $\Delta_{\pi_u(p)}$ is a non-degenerate parabola.
The point $p$ lies on the curvature ellipse if the resultant of the two quadratic polynomials given by the first two components of $\eta_e(\theta)$ is 0, i.e. they have a common root. Similarly the point lies outside/inside the ellipse if the resultant is $>0/<0$. This resultant is the discriminant used to distinguish hyperbolic/elliptic/parabolic points of regular surfaces in $\mathbb R^4$ (\cite{Little,MochidaFusterRuas}). Notice that the same resultant gives the common roots of the quadratic polynomials in $\eta_p(y)$ and so the point $p$ lies inside/outside/on the ellipse if and only if the point $\pi_u(p)$ lies inside/outside/on the parabola. The result follows.

Suppose now that $u$ is an $A_k$-asymptotic direction. We distinguish into cases: If $p\in \M_2^h$ then, by Proposition 3.9 in \cite{Fuster/Ruas/Tari} there are exactly 2 $A_k$-asymptotic directions $u_i$, $i=1,2$, in which the projection has an $\II_2$ singularity. Therefore $j^2\pi_{u_i}f(0)\sim_{\mathscr A^2}(x,y^2,0,0)$ and by Theorem 3.6 in \cite{Benedini/Sinha/Ruas} the curvature parabola is a half-line.
Since the point is in $\M_2$, this half line is not radial and therefore has 2 asymptotic direction, i.e. the projection is a hyperbolic point.

If $p\in \M_2^e$ there are no $A_k$-asymptotic directions.

Finally, if $p\in \M_2^p$ and $u$ is the only $A_k$-asymptotic direction where the projection is an $\VII_1$ singularity, the 2-jet of the projection is equivalent to $(x,xy,0,0)$ and the curvature parabola is a non-radial line, which means that there is only one asymptotic direction and the projection is a parabolic point.

Now, when projecting along $u$, as seen above, $E_p=E_{\pi_u(p)}$. The degenerate direction $\nu$ dual to the $A_k$-asymptotic direction $v$ which lies in $E_p$ may be binormal or not (depending on whether the singularity of $h_{\nu}$ is of type $A_{k\geq 3}$ or $A_2$). In any case, since the height functions for $p\in M_{\reg}^2$ and for $\pi_u(p)\in M_{\sing}^2$ coincide, $\nu$ is a binormal direction at $\pi_u(p)$. The associated contact directions are therefore asymptotic at $\pi_u(p)$ and coincide with the $A_k$-asymptotic directions at $p$.
\end{proof}


\section{Projecting surfaces in $\mathbb R^5$ along a normal direction to 4-space}

Consider $\nu\in N_pM^2_{\reg}$ and let $S_{\nu}$ be the surface patch obtained by projecting $M^2_{\reg}$ orthogonally along $\nu$. The projection is a regular surface in $\R^4$. We are interested in establishing some relation between the special directions in $M^2_{\reg}$ with $S_{\nu}$. In this sense, since the definition of binormal (or asymptotic) directions in $M^2_{\reg}\subset\mathbb R^5$ depend on higher (than second) order singularities of the height function, we need to consider higher order terms of $M^2_{\reg}$.

Following \cite{Fuster/Ruas/Tari}, we choose local coordinates at $p$ and consider the surface taken in Monge form
\begin{equation}\label{normalform}
(x,y,Q_1 + f^1(x,y),Q_2 + f^2(x,y),Q_3+f^3(x,y)),
\end{equation}
 where $Q=(Q_1,Q_2,Q_3)=(x^2,xy,y^2)$ at $\M_3$ points, $Q=(x^2,y^2,0)$ at $\M_2^h$ points, $Q=(x^2,xy,0)$ at $\M_2^p$ points, $Q=(x^2-y^2,xy,0)$ at $\M_2^e$ points, and
 $$
 f^1(x,y) =\sum_{i+j\geq3} a_{ij}x^iy^j,  \;\;\;  f^2(x,y) =\sum_{i+j\geq3} b_{ij}x^iy^j, \;\;\;  f^3(x,y) =\sum_{i+j\geq3} c_{ij}x^iy^j.
 $$
Consider a unit normal vector $\nu=(\nu_3,\nu_4,\nu_5)$. Let us analyze the projection of points $p\in\M_3\cup\M_2$ along $\nu$:

\

\hspace{-0.4cm}{\it $\M_3$-points case:} In this case, we take the Monge form as in (\ref{normalform}).
Without loss of generality, we assume $\nu=(0,0,1)$ belongs to a unit sphere $S^2\subset\R^3$ and $(Z,W,T)$ the coordinates in  $\mathbb R^3$ of $N_pM^2_{\reg}$.  We parametrize the directions near to $\nu$ by $\eta=(\nu_3,\nu_4,\sqrt{1-\nu_3^2-\nu_4^2})$. Instead of the regular projection to the plane $(\nu_3,\nu_4,\sqrt{1-\nu_3^2-\nu_4^2})^\perp$, we project to the fixed plane $(Z,W)$. The modified family of the orthogonal projections is given by
\[
\begin{array}{cccl}
                 \pi: & (\mathbb{R}^2\times \mathbb{R}^2,0) & \rightarrow & (\mathbb{R}^4,0) \\
                  & ((x,y),(\nu_3,\nu_4)) & \mapsto & \pi_{\eta}(x,y),
                  \end{array}
\]
where
$$
\pi_{\eta}(x,y)=\Big(x,y,x^2+f^1(x,y)-\nu_3(y^2+f^3(x,y)),xy+f^2(x,y)-\nu_4(y^2+f^3(x,y))\Big)
$$
and $\pi_\nu(x,y)=(x,y,x^2+f^1(x,y),xy+f^2(x,y))$
are parametrizations of the surfaces $S_{\eta}$ and $S_\nu$, respectively. Moreover, at generic points, by \cite{Fuster/Ruas/Tari}, the $A_k$-asymptotic directions $u=(u_1,u_2)$ are given as solutions of the following equation
\begin{equation}\label{asymp}
\begin{array}{c}
c_{30}u_1^5 - (2b_{30}-c_{21})u_1^4u_2 + (c_{12}-2b_{21} + a_{30})u_1^3u_2^2+(c_{03}-2b_{12}+ a_{21})u_1^2u_2^3  \\
+ (a_{12}-2b_{03})u_1u_2^4 + a_{03}u_2^5 = 0.
\end{array}
\end{equation}
For any normal direction $\nu$ (not necessarily a binormal direction), we can associate the number of $A_k$-asymptotic directions with the geometry of $S_\eta$ and $S_\nu$ as follows.

\begin{prop} With the above conditions, given $p\in \M_3$, we have the following:
\begin{itemize}
\item[i)] $\pi_{\eta}(p)$ is a hyperbolic point $S_\eta$ if and only if $\nu_4^2-\nu_3<0$.
\item[ii)] $\pi_{\eta}(p)$ is an elliptic point  $S_\eta$ if and only if $\nu_4^2-\nu_3>0$.
\item[iii)] $\pi_{\eta}(p)$ is a parabolic point  $S_\eta$ if and only if $\nu_4^2-\nu_3=0$. There are at most $5$ and at least $1$ normal directions $\eta$ where the surface $S_{\eta}$ has a $P_3(c)$-point at the origin. In addition, the number of the $A_k$-asymptotic directions in $\M_3$-points and the number of the points where the asymptotic direction of $S_{\eta}$ is tangent to its parabolic set are the same. These are the solutions of the following equation
\begin{equation}\label{P3}
\begin{array}{c}
c_{30}\nu_4^5 - (2b_{30}-c_{21})\nu_4^4 + (c_{12}-2b_{21} + a_{30})\nu_4^3 + (c_{03}-2b_{12} + a_{21})\nu_4^2 \\
+ (a_{12}-2b_{03})\nu_4 + a_{03} = 0.
\end{array}
\end{equation}
Moreover, the unique asymptotic direction at $P_3(c)$-point in $S_\nu$ is an $A_k$-asymptotic direction in $M^5_{\reg}$.
\end{itemize}
\end{prop}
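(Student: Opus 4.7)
The plan is to work directly from the explicit formula for $\pi_\eta$ given in the text. First I compute the $2$-jet of $\pi_\eta$ at the origin, which is $(x,y,\, x^{2}-\nu_{3}y^{2},\, xy-\nu_{4}y^{2})$; this is a Monge form for $S_\eta$ with second fundamental form coefficients $l_{1}=2,\ m_{1}=0,\ n_{1}=-2\nu_{3},\ l_{2}=0,\ m_{2}=1,\ n_{2}=-2\nu_{4}$. The classification of a point on a surface in $\R^{4}$ as elliptic/parabolic/hyperbolic is controlled by the standard discriminant $\delta=4ac-b^{2}$, where $a=l_{1}m_{2}-l_{2}m_{1}$, $b=l_{1}n_{2}-l_{2}n_{1}$, $c=m_{1}n_{2}-m_{2}n_{1}$ (see \cite{MochidaFusterRuas}). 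A direct substitution yields $a=2$, $b=-4\nu_{4}$, $c=2\nu_{3}$, hence $\delta=16(\nu_{3}-\nu_{4}^{2})$. This proves items (i)--(iii) at once: the sign of $\nu_{4}^{2}-\nu_{3}$ places the origin inside, outside or on the curvature ellipse of $S_\eta$.

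To characterise when $S_\eta$ has a $P_{3}(c)$-point, I specialise to the parabolic case $\nu_{3}=\nu_{4}^{2}$ and read off the unique asymptotic direction from the kernel of $\mathrm{Hess}(h_{\nu^{\perp}})$. Solving $w_{1}^{2}-2\nu_{4}w_{1}w_{2}+\nu_{3}w_{2}^{2}=0$ under $\nu_{3}=\nu_{4}^{2}$ gives $u=(\nu_{4},1)$ up to scalar. A $P_{3}(c)$-point is, by definition, a parabolic point where the asymptotic direction is tangent to the parabolic set. Viewing $\delta$ as a function of $(x,y)$ through the parametrisation $\pi_\eta$, I expand
\begin{equation*}
\delta(x,y)=4\,a(x,y)c(x,y)-b(x,y)^{2},
\end{equation*}
with $a,b,c$ built from the second partials of the third and fourth components of $\pi_\eta$, and use the $3$-jet data $a_{ij},b_{ij},c_{ij}$ to compute the linear part of $\delta$ at the origin. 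The $P_{3}(c)$ condition is then $\nu_{4}\delta_{x}(0,0)+\delta_{y}(0,0)=0$, i.e.\ the tangency of $u=(\nu_{4},1)$ with $\{\delta=0\}$.

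The key observation, and the payoff of the calculation, is that this tangency equation, after substituting the Monge coefficients and the relation $\nu_{3}=\nu_{4}^{2}$, reduces precisely to equation~(\ref{P3}). Equivalently, equation~(\ref{P3}) is nothing but equation~(\ref{asymp}) for the $A_{k}$-asymptotic directions evaluated at $(u_{1},u_{2})=(\nu_{4},1)$. This one identity establishes both the explicit form of the $P_{3}(c)$-condition and the last assertion of the proposition: each $A_{k}$-asymptotic direction $(u_{1},u_{2})=(\nu_{4},1)$ at $p$ corresponds to a normal $\eta$ for which $\pi_\eta(p)$ is a $P_{3}(c)$-point whose unique asymptotic direction is exactly $u$. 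Finally, the counting claim follows from the fact that~(\ref{P3}) is a quintic polynomial in $\nu_{4}$, hence has at most $5$ and, being of odd degree, at least $1$ real root.

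The main obstacle is bookkeeping: computing the linear part of $\delta(x,y)=4ac-b^{2}$ at the origin when $\nu_{3}=\nu_{4}^{2}$, and verifying that the five coefficients obtained agree term by term with those in~(\ref{P3}). This is a routine but lengthy jet computation; the non-trivial content is recognising, \emph{a priori}, that the tangency condition to the parabolic curve is the same polynomial as the asymptotic equation~(\ref{asymp}) evaluated along the line $(\nu_{4},1)$, which is what makes the correspondence between $P_{3}(c)$-directions in the target and $A_{k}$-asymptotic directions in the source clean.
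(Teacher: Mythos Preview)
Your argument is correct and follows essentially the same route as the paper: compute the discriminant $\nu_4^{2}-\nu_3$ from the $2$-jet of $\pi_\eta$ for the hyperbolic/elliptic/parabolic classification, then identify the $P_3(c)$ condition on the parabolic locus $\nu_3=\nu_4^{2}$ as equation~(\ref{P3}) and compare it with the asymptotic equation~(\ref{asymp}). Your explicit observation that (\ref{P3}) is literally (\ref{asymp}) evaluated at $(u_1,u_2)=(\nu_4,1)$, together with the identification of the unique asymptotic direction of $S_\eta$ at a parabolic point as $(\nu_4,1)$, is sharper than the paper's statement that the two quintics share the same discriminant, and it yields the final assertion for every $\eta$ on the parabola $\nu_3=\nu_4^{2}$ rather than only for the special case $\nu=(0,0,1)$ treated in the paper.
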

\begin{proof}
Consider $(x,y,x^2-\nu_3y^2,xy-\nu_4y^2)$ the 2-jet of $\pi_{\eta}$ at the origin. The resultant of the last two coordinates of $j^2\pi_{\eta}(0)$ is $\Delta=\nu_4^2-\nu_3$, so a point at surface is hyperbolic/elliptic/parabolic if and only if $\Delta<$/$>$/$=0$. The other part of the proof of statement iii) follows by making changes of coordinates in order to reduce the appropriate jet of $\pi_{\eta}$.
When projecting a regular surface in $\R^4$ along the unique asymptotic direction $u$ at a parabolic point, the projection has a $P_3(c)$-singularity at isolated points on the parabolic set,  i.e. it is $\A$-equivalent to $(x,y)\mapsto(x,x^2y+cy^4,xy+y^3)$ with $c\in\mathbb R\setminus{\{0,\frac12,1,\frac32\}}$ (see \cite{Deolindo}). In particular, the unique asymptotic direction is tangent to the parabolic set of $S_\eta$ at a $P_3(c)$-point.
As the discriminant of (\ref{asymp}) coincides with the discriminant of (\ref{P3}), we have the statement.
Moreover, the projection along $\nu=(0,0,1)$ gives a regular surface $S_\nu$ which has a $P_3(c)$-point if and only if $a_{03}=0$. In this case, the unique asymptotic direction $u=(0,1)$ at $\pi_\nu(p)\in S_\nu$ is an $A_k$-asymptotic direction at $p$ in $M_{\reg}^5$.
\end{proof}

\hspace{-0.4cm}{\it $\M_2$-points case:} In \cite[Theorem 4.14]{Benedini/Sinha/Ruas}, the authors associate a regular surface in $\R^4$ to any singular surface in $\R^4$. To do so they consider the regular surface in $\R^5$ that gives the singular surface in $\R^4$ when projected along a tangent direction $u$ and project it to the 4-space given by $T_{p}M^2_{\reg}\oplus\pi_u^{-1}(E_{\pi_u(p)})$. This regular surface is $S_{\nu}$ when $\nu\in E_p^{\perp}$ (see the following diagram).
$$
\xymatrix{
&  & M^2_{\reg}\subset\mathbb{R}^{5}\ar[rd]^-{\pi_\nu}\ar[d]^-{\pi_u} & \\
\mathbb{R}^{2}\ar@/_0.7cm/[rr]^-{f}& \tilde{M}\ar[l]_-{\phi}\ar[r]^-{g}\ar[ru]^-{i}& M^2_{\sing}\subset\mathbb{R}^{4}& S_\nu\subset\mathbb{R}^{4}
}
$$
The authors prove that $\pi_u(p)\in M^2_{\sing}$ is an elliptic/hyperbolic/parabolic if and only
$\pi_{\nu}(p) \in S_{\nu}$ is an elliptic/hyperbolic/parabolic, respectively. Furthermore, a direction $v$ is an asymptotic direction of $M^2_{\sing}$ if and only if it is
also an asymptotic direction of the associated regular surface $S_{\nu}$.

In particular, at $\M_2$-points of $M^2_{\reg}$, when $\nu\in E_p^{\perp}$ it is a flat umbilic direction. Then we have the following.


\begin{prop}\label{A3asymp}
Let $p\in \M_2\subset M^2_{\reg}\subset \R^5$ and consider a direction of projection $\nu$ in $E_p^{\perp}$. A direction $v\in T_p M_{\reg}^2$ is $A_k$-asymptotic at $p$ if and only if $v$ is asymptotic at $\pi_{\nu}(p)$ in $S_{\nu}$. Moreover, the second order geometry of $M^2_{\reg}$ and $S_{\nu}$ is the same.\end{prop}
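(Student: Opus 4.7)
The plan is to put $M^2_{\reg}$ into an adapted Monge form. After a suitable rotation of the orthonormal normal frame we may assume $E_p\subseteq\mathrm{span}(e_3,e_4)$ and that $\nu=e_5\in E_p^\perp$ is (along) the flat umbilic direction. With this choice, the Monge form \ref{normalform} for $\M_2$-points already has $Q_3\equiv 0$, because the image of $II_p$ for $M^2_{\reg}$ is contained in $E_p$. The orthogonal projection along $\nu$ then simply deletes the last coordinate, yielding the regular parametrisation $\pi_\nu\circ f(x,y)=(x,y,Q_1+f^1(x,y),Q_2+f^2(x,y))$ of $S_\nu\subset\R^4$; in particular, $N_{\pi_\nu(p)}S_\nu$ is canonically identified with $\mathrm{span}(e_3,e_4)\supseteq E_p$.

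For the ``same second order geometry'' claim I would compare second fundamental forms directly. The form $II_p$ for $M^2_{\reg}$ has components $(Q_1,Q_2,0)$ in the frame $(e_3,e_4,e_5)$, while the analogous Monge computation applied to $S_\nu$ yields $II_{\pi_\nu(p)}$ with components $(Q_1,Q_2)$ in $(e_3,e_4)$. Under the identification above they agree as bilinear forms, and in particular the curvature ellipse $\Delta_e^5$ of $M^2_{\reg}$ coincides with the curvature ellipse of $S_\nu$ as subsets of $E_p$. For the asymptotic direction equivalence I would then invoke Proposition \ref{lemma1}: $v\in T_pM^2_{\reg}$ is $A_k$-asymptotic at $p$ if and only if $\eta_e(v)$ is tangent to $\Delta_e^5$; and by the definition recalled in Subsection \ref{regular}, $v$ is asymptotic at $\pi_\nu(p)\in S_\nu$ if and only if $\eta(v)$ is tangent to the curvature ellipse of $S_\nu$. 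Since these ellipses coincide, the two tangency conditions do too. A direct cross-check via height functions is also available: for any $\nu'\in E_p$ the function $h_{\nu'}$ computed on $M^2_{\reg}$ and on $S_\nu$ is literally the same function of $(x,y)$, so by Remark \ref{rem1} the $A_k$-asymptotic directions at $p$ (the contact directions of the non-flat-umbilic degenerate directions in $E_p$) coincide with the asymptotic directions of $S_\nu$.

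The main obstacle I foresee is the preliminary bookkeeping: one has to verify that choosing $\nu\in E_p^\perp$ to be the flat umbilic direction (as in Remark \ref{rem1}) does force $Q_3\equiv 0$ in the Monge form, and to handle the sub-cases $\M_2^e$, $\M_2^p$ and $\M_2^h$ uniformly. The semiumbilic sub-case of $\M_2^h$, where $\Delta_e^5$ itself degenerates to a segment, needs particular care; there one must check that Proposition \ref{lemma1}, read at the endpoints of the segment (where $\eta_e'$ vanishes), produces exactly the two $A_k$-asymptotic directions, which then match the two asymptotic directions of the corresponding hyperbolic-type point of $S_\nu$.
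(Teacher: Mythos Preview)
Your argument is correct, but the route differs from the paper's. The paper proves the asymptotic-direction equivalence by passing through the \emph{singular} surface: by Theorem~\ref{projgeneric}, $v$ is $A_k$-asymptotic at $p$ if and only if $v$ is asymptotic at the tangent projection $\pi_u(p)\in M^2_{\sing}\subset\R^4$; then Theorem~4.14 of \cite{Benedini/Sinha/Ruas} identifies the asymptotic directions of $M^2_{\sing}$ with those of the associated regular surface $S_\nu$. Your proof bypasses the singular intermediary entirely: once $Q_3\equiv 0$ and the projection along $\nu=e_5$ literally drops the last coordinate, the second fundamental forms of $M^2_{\reg}$ and $S_\nu$ coincide in $E_p$, and Proposition~\ref{lemma1} together with the tangency characterisation of asymptotic directions for surfaces in $\R^4$ finishes the job. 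For the ``same second order geometry'' clause both proofs say exactly the same thing. Your approach is more self-contained (it avoids the external citation) and makes transparent \emph{why} the result holds, namely that projecting along the flat umbilic direction kills nothing at second order; the paper's approach has the merit of slotting the statement into the commutative diagram $M^2_{\reg}\to M^2_{\sing}\leftrightarrow S_\nu$ that organises the whole section. The caution you flag about the semiumbilic sub-case is warranted: there $E_p^\perp$ is two-dimensional, Proposition~\ref{lemma1} as stated treats the non-degenerate ellipse, and the tangency criterion at the endpoints of the segment needs a word; but the height-function cross-check you mention (which only uses that $h_{\nu'}$ for $\nu'\in E_p$ is literally the same function on $M^2_{\reg}$ and $S_\nu$) handles it uniformly.
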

\begin{proof}
The proof follows from Theorem \ref{projgeneric} and Theorem 4.14 in \cite{Benedini/Sinha/Ruas}. The second order geometry is the same because the second fundamental forms at $p\in M^2_{\reg}$ and at $\pi_{\nu}(p)\in S_\nu$ coincide.
\end{proof}

From Proposition \ref{A3asymp} we obtain as a corollary \cite[Theorem 7]{MochidaFusterRuas3} with some extra information about the asymptotic directions:

\begin{coro}\label{SurfR4} If $p\in \M_2$ and the normal direction $\nu\in E_p^{\perp}$, then
\begin{itemize}
\item[i)] $p \in \M_2^h$ if and only if $\pi_{\nu}(p)$ is a hyperbolic point of $S_{\nu}$.
\item[ii)] $p\in \M_2^e$ if and only if $\pi_{\nu}(p)$ is an elliptic point of  $S_{\nu}$
\item[iii)] $p\in \M_2^p$ if and only if $\pi_{\nu}(p)$ is a parabolic point of $S_{\nu}$.
\end{itemize}
Moreover, the $A_k$-asymptotic directions at $p$ coincide with the asymptotic directions at $\pi_{\nu}(p)$.
\end{coro}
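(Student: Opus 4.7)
The plan is to chain two equivalences through the commutative diagram at the start of this section, using Theorem \ref{projgeneric} and Theorem 4.14 of \cite{Benedini/Sinha/Ruas} (equivalently, Proposition \ref{A3asymp}).

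First I would fix a tangent direction $u\in T_pM^2_{\reg}$ and consider the singular projection $\pi_u(p)\in M^2_{\sing}\subset\R^4$. By Theorem \ref{projgeneric}, $p\in\M_2^h$ (resp.\ $\M_2^e$, $\M_2^p$) if and only if $\pi_u(p)$ is a hyperbolic (resp.\ elliptic, parabolic) point of $M^2_{\sing}$, and the $A_k$-asymptotic directions at $p$ coincide with the asymptotic directions at $\pi_u(p)$. This gives the first half of the chain.

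Next I would invoke the diagram: since $\nu\in E_p^{\perp}$ is a flat umbilic direction at the $\M_2$-point $p$, the projection $\pi_\nu$ sends $M^2_{\reg}$ to precisely the regular surface $S_\nu\subset\R^4$ that \cite[Theorem 4.14]{Benedini/Sinha/Ruas} associates to $M^2_{\sing}$ (this is exactly the setup described in the paragraph preceding Proposition \ref{A3asymp}). That theorem then states that $\pi_u(p)$ is hyperbolic/elliptic/parabolic if and only if $\pi_\nu(p)\in S_\nu$ is hyperbolic/elliptic/parabolic, and the asymptotic directions of $M^2_{\sing}$ at $\pi_u(p)$ coincide with those of $S_\nu$ at $\pi_\nu(p)$. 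Composing the two equivalences yields items i)--iii) and the statement about asymptotic directions.

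A conceptually cleaner alternative, which I would also mention, is to argue directly from the coincidence of second fundamental forms at $p$ and $\pi_\nu(p)$ asserted in Proposition \ref{A3asymp}: since $\nu\in E_p^{\perp}$, projecting along $\nu$ kills only a direction orthogonal to $Aff_p$, so the curvature ellipse $\Delta_e^5\subset N_pM^2_{\reg}$ and the curvature ellipse of $S_\nu$ at $\pi_\nu(p)$ are identified as plane curves together with the position of the origin. Because the trichotomy $\M_2^h/\M_2^e/\M_2^p$ is defined by whether the origin lies outside/inside/on $\Delta_e^5$, and the trichotomy hyperbolic/elliptic/parabolic for surfaces in $\R^4$ is defined by the same condition on the curvature ellipse, the two classifications must agree point by point; asymptotic directions then match by Proposition \ref{A3asymp}.

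The only subtle point, and the step I would double-check, is the geometric identification of normal spaces needed to say that ``the same curvature ellipse'' makes sense. Concretely, $N_pM^2_{\reg}$ is $3$-dimensional while $N_{\pi_\nu(p)}S_\nu$ is $2$-dimensional, but the hypothesis $\nu\in E_p^{\perp}$ ensures that $E_p\subset \nu^{\perp}$, so $E_p$ embeds isometrically into $N_{\pi_\nu(p)}S_\nu$ and carries $\Delta_e^5$ and the basepoint $p$ onto their counterparts in $S_\nu$. Once this identification is recorded, the three equivalences follow immediately and the asymptotic-direction statement is exactly the second clause of Proposition \ref{A3asymp}.
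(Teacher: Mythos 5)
Your proposal is correct and follows essentially the same route as the paper: the corollary is deduced from Proposition \ref{A3asymp}, whose proof is precisely the chain through Theorem \ref{projgeneric} and \cite[Theorem 4.14]{Benedini/Sinha/Ruas} that you describe, with the hyperbolic/elliptic/parabolic trichotomy matched via the coincidence of the second fundamental forms of $M^2_{\reg}$ at $p$ and $S_\nu$ at $\pi_\nu(p)$. Your remark about identifying $E_p$ isometrically inside $N_{\pi_\nu(p)}S_\nu$ is a useful explicit record of a step the paper leaves implicit, but it is not a different argument.
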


\begin{rem}
The previous two results show further how $\M_2$-points of surfaces in $\R^5$ behave as points of regular surfaces in $\R^4$.
\end{rem}

Now, we project $\M_2$-points along binormal directions $\nu$ that are associated to $A_k$-asymptotic directions at $p$.  We have the following result which characterizes when a binormal direction lies in $\mathcal C_p\cap E_p$.

\begin{prop}
Let $p\in \M_2\subset M^2_{\reg}$ and $\nu$ be a binormal direction which is not a flat umbilic. Then $\nu\in E_p$ if and only if the regular surface $S_\nu$ obtained by projecting $M^2_{\reg}$ along $\nu$ has an inflection point at $\pi_\nu(p)$.
\end{prop}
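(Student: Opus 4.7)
The plan is to reduce the claim to a geometric statement about the orthogonal projection of the curvature ellipse $\Delta_e^5$ along the normal direction $\nu$.

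First I would set up the projection precisely: since $\nu\in N_pM^2_{\reg}$, the orthogonal projection $\pi_\nu\colon\R^5\to\nu^\perp\cong\R^4$ carries $T_pM^2_{\reg}$ isomorphically onto $T_{\pi_\nu(p)}S_\nu$ and sends $N_pM^2_{\reg}$ onto $N_{\pi_\nu(p)}S_\nu=N_pM^2_{\reg}\cap\nu^\perp$. A direct computation (using that $\pi_\nu$ commutes with orthogonal projection onto $T_pM^2_{\reg}$) shows that the second fundamental form of $S_\nu$ at $\pi_\nu(p)$ is the composition $\pi_\nu\circ II_p$. Consequently the curvature ellipse of $S_\nu$ at $\pi_\nu(p)$ is exactly $\pi_\nu(\Delta_e^5)$, and $\pi_\nu(p)$ is an inflection point of $S_\nu$ precisely when this projected curve is contained in a line but is not a single point.

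Next I would use the hypothesis that $p\in\M_2$ and $\nu$ is a binormal distinct from the flat umbilic. In that case (see Remark \ref{rem1}), the relevant situation is that $Aff_p=E_p$ is a $2$-plane through the origin of $N_pM^2_{\reg}$, so the ellipse $\Delta_e^5$ is a non-degenerate ellipse lying in the plane $E_p$, and the image of $II_p$ is precisely $E_p$.

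The proof then splits on whether $\nu$ lies in $E_p$. If $\nu\in E_p$, the restriction $\pi_\nu|_{E_p}$ has $1$-dimensional image, so $\pi_\nu(\Delta_e^5)$ is contained in a line; a short coordinate check in an orthonormal basis of $N_pM^2_{\reg}$ whose first vector is $\nu$ and whose third vector is the flat umbilic $\nu_0$ confirms that the projected $\alpha$-matrix has rank exactly one (it cannot drop to zero, because $E_p\not\subset\operatorname{span}(\nu,\nu_0)$). Hence $\pi_\nu(p)$ is an inflection point of $S_\nu$. Conversely, if $\nu\notin E_p$, then $\ker\pi_\nu\cap E_p=\{0\}$, so $\pi_\nu|_{E_p}$ is injective and sends the non-degenerate ellipse $\Delta_e^5$ onto a non-degenerate ellipse in $\nu^\perp$, so $\pi_\nu(p)$ is not an inflection point.

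The main obstacle I anticipate is justifying rigorously that the projected rank is exactly one (and not zero) when $\nu\in E_p$, together with disposing cleanly of the semiumbilic sub-branch of $\M_2^h$ where $\Delta_e^5$ is already a line segment; both should fall out of the description of the cone $\mathcal C_p$ in Remark \ref{rem1} combined with the Monge normal forms listed in (\ref{normalform}).
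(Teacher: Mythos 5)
Your proposal is correct and reaches the statement by a genuinely different route from the paper. The paper's proof is a direct computation in the Monge normal form (\ref{normalform}): it identifies the non--flat-umbilic binormals explicitly as $\nu_1=(c_{03},0,-a_{03})$ and $\nu_2=(0,c_{30},-b_{30})$, writes down the parametrisation of $S_\nu$, and reads off the discriminant $\Delta=a_{03}^{2}$ of the projected surface, so that an inflection occurs iff $a_{03}=0$ iff $\nu\in E_p$. You instead use that the second fundamental form of $S_\nu$ at $\pi_\nu(p)$ is $\pi_\nu\circ II_p$ and reduce everything to linear algebra on the first normal space: the rank of the $\alpha$-matrix of $S_\nu$ equals $\dim\pi_\nu(\operatorname{Im} II_p)$, which is $1$ exactly when $\nu\in\operatorname{Im} II_p$. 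This is cleaner and in fact stronger, since binormality of $\nu$ is never used (the equivalence holds for every normal direction), though it does not exhibit the binormals explicitly. Two remarks. First, your argument that the rank cannot drop to $0$ via $E_p\not\subset\operatorname{span}(\nu,\nu_0)$ appeals to the wrong kernel: $\pi_\nu$ restricted to $N_pM^2_{\reg}$ has kernel $\langle\nu\rangle$ only, so when $\nu\in E_p$ the rank of $\pi_\nu|_{E_p}$ is automatically $2-1=1$ and no further condition is needed. Second, the semiumbilic sub-branch you flag is a real caveat, but it is one the paper's own proof shares rather than resolves: at a semiumbilic point $E_p$ is by definition a line, whereas both your argument and the paper's computation actually characterise membership of $\nu$ in the two-dimensional span of the image of $II_p$ (the plane $\{T=0\}$ of the normal form); it will not simply ``fall out'' of Remark \ref{rem1}, and the statement should be read with $E_p$ denoting that plane, equivalently with semiumbilic points excluded.
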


\begin{proof}
Consider  $\M_2\subset M^2_{\reg}$ in Monge form as in (\ref{normalform}) at $p$.  If $\nu$ is not a flat umbilic, then it is associated to an $A_k$-asymptotic direction. At $\M_2^h$-points,
 the two $A_k$-asymptotic directions are dual to the binormal directions $\nu_1=(c_{03},0,-a_{03})$ and $\nu_2=(0,c_{30},-b_{30})$. At $\M_2^p$-points, the unique $A_k$-asymptotic direction is associated to the dual direction $\nu_1$. There is no $A_k$-asymptotic direction at $\M_2^e$-points.
Consider $S_\nu$ the regular projection along the binormal direction $\nu=(c_{03},0,-a_{03})$ (the case $\nu=(0,c_{30},-b_{30})$ is analogous). The projection for a point in $\M_2^h$ (resp.  $\M_2^p$) is locally the regular surface $S_\nu$ parametrized by
$$
(x,y,-a_{03}(Q_1+f^1(x,y))-c_{03}(Q_3+f^3(x,y)),Q_2+f^2(x,y)).
$$
 Since the discriminant of $S_\nu$ is $\Delta= a_{03}^2>0$  (resp. $=0$) (see \cite{Little}), we get that  given $p\in \M_2^h$ (resp. $\M_2^p$), a point in $S_{\nu}$ is a hyperbolic (resp. parabolic)  if and only if $a_{03}$ is non zero. On the other hand, $a_{03}=0$ if and only if $\pi_\nu(p)$ is an inflection point. In particular, $a_{03}=0$ if and only if  $\nu\in E_p$.
\end{proof}

\section{Contact with spheres of regular surfaces in $\mathbb R^5$ and singular surfaces in $\mathbb R^4$}

In \cite{Costa/Fuster/Moraes}, some aspects of the geometry of regular surfaces in $\R^5$ are given by the family of distance squared functions on $M^2_{\reg}$
$$
\begin{array}{rcl}
D : M^2_{\reg} \times S^4 &\to& \R \\
(p, \nu)&\mapsto& ||p-\nu||^2 = d_\nu(p).\\
\end{array}
$$
A distance squared function $d_\nu$ has a singularity at $p\in M^2_{\reg}$ if and only if $\nu\in N_pM^2_{\reg}$. 
A {\it focal center} at $p\in M^2_{\reg}$ is a point at which $d_\nu$ has a degenerate singularity (i.e., $d_\nu$ has a singularity $\A$-equivalent to $A_2$ or worse). The directions lying in the kernel of the corresponding Hessian quadratic form are said to be {\it spherical contact directions} at $p$. A focal center is said to be an {\it umbilical focus} at $p$ if $d_\nu$ has a singularity of corank $2$ at $p$. The corresponding focal hypersphere is called {\it umbilical focal hypersphere}.
The set of the focal centres of the points of $M^2_{\reg}$ is said to be the {\it focal set} of $M^2_{\reg}$ and  denoted by $\mathcal F$. At $p\in \M_3$, there exists a unique umbilical focus $\displaystyle \nu =p+ \frac{1}{k_u^{\reg}}v_p$ with $v_p$ an unit normal vector in $E_p^{\perp}$. 
If $p\in \M_2$ is not semiumbilic, the umbilical focus lies at infinity and the umbilical focal hypersphere becomes an osculating hyperplane, and in the opposite case, there is a line of umbilical foci at $p$ contained in the vector plane orthogonal to the affine line $E_p$ (\cite{Costa/Fuster/Moraes}).

For $M^2_{\sing}\subset \mathbb R^4$, the definitions of focal set and umbilical focus at a singular point $p$ can be extended in a natural way from those of $M^2_{\reg}\subset \R^5$.
We also call a frame $[\nu_2,\nu_3,\nu_4]$ an adapted frame of $N_pM^2_{\sing}$ if it is a positively oriented orthonormal frame, $\nu_4\in E_p^{\perp}$ ($E_p$ is always a plane for $M^2_{\sing}\subset \mathbb R^4$), $[\nu_2,\nu_3]$ is a positively oriented frame of $E_p$ and: $\nu_2$ is parallel to the direction of $\Delta_p$ when $\Delta_p$ is a line or a half-line; $\nu_3$ is the direction of $\Delta_p$ when $\Delta_p$ is a point different from $p$.

\begin{teo} \label{contactspheresing} Let $M^2_{\sing}\subset \mathbb R^4$ be a surface with a singularity of corank 1 at $p\in M^2_{\sing}$ and let $[\nu_2,\nu_3,\nu_4]$ be an adapted frame of $N_pM^2_{\sing}$. The function $d_{\nu}$ is singular at $p$ if and only if $\nu\in N_pM$. Furthermore, the following possibilities hold:
\begin{itemize}
\item[i)]If $p\in \M_3$ 
then the focal set of $M^2_{\sing}$ at $p$ is a quadric. Moreover, it intersects the set of degenerate directions on a line $\nu=p+r(\nu_2+\nu_4)$, $r\in\mathbb R$. In addition, 
the umbilical focus at $p$ is given by
$$
\nu=p+\frac{1}{\kappa^{\sing}_{u}}\nu_4\in E^{\perp}_p.
$$
\item[ii)] If $p\in \M_2$ is semiumbilic and 
\begin{itemize}
\item[1)] 
$\Delta_p$ is a half-line, then the focal set of $M^2_{\sing}$ at $p$ is the union of two transverse planes intersecting in a line of umbilical foci at $p$ given by $\ell=p+\frac{1}{\kappa^{\sing}_{u}}\nu_3+r\nu_4$, $r\in\mathbb R$. 
\item[2)] $\Delta_p$ is a line, then the focal set of  $M^2_{\sing}$ at $p$ is equal to the set of the degenerate directions ($E^{\perp}_p$). In addition, $\ell$ 
 is a line of umbilical foci at $p$. 
\end{itemize}
Moreover, the focal set of $M^2_{\sing}$ at $p$ intersects the set of degenerate directions when $\nu\in E_p^{\perp}$.
\item[iii)] If $p\in \M_1$ and
\begin{itemize}
\item[1)] $\Delta_p$ is a half-line, then the focal set of $M^2_{\sing}$ at $p$ is the union of two parallel planes in $N_pM^2_{\sing}$, which can be coincident.
\item[2)] $p$ is umbilic non flat, then the focal set of $M^2_{\sing}$ at $p$ is the hyperplane $N_pM^2_{\sing}$. In addition, 
there is a plane of umbilical foci at $p$ given by
$$
\nu=p+t\nu_2+\frac{1}{\kappa^{\sing}_{u}}\nu_3+r\nu_4,\;t,r\in\mathbb R.
$$
\end{itemize}
\end{itemize}
\end{teo}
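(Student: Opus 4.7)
The plan is to compute the Hessian of $d_\nu$ at $q$ using a Monge form parametrization of $M^2_{\sing}$ at $p$ and then analyze its rank in each case. In the Monge form $f(x,y)=(x,f_2,f_3,f_4)$ described in Section~\ref{prelim}, at the corank-$1$ point we have $f_x=(1,0,0,0)$, $f_y=0$, and the vectors $f_{xx},f_{xy},f_{yy}$ all lie in $N_pM^2_{\sing}$. A direct computation shows that $\nabla(d_\nu\circ f)(q)=0$ if and only if $\nu-p\in N_pM^2_{\sing}$, which proves the first assertion. Writing $\nu=p+r_2\nu_2+r_3\nu_3+r_4\nu_4$ in the adapted frame, the Hessian $H(\nu)$ at $q$ has entries
\[
H_{11}=2-2\sum_{i=2}^{4}r_i l_{\nu_i},\qquad H_{12}=-2\sum_{i=2}^{4}r_i m_{\nu_i},\qquad H_{22}=-2\sum_{i=2}^{4}r_i n_{\nu_i},
\]
so the focal set of $M^2_{\sing}$ at $p$ is cut out by $\det H(\nu)=0$ and the umbilical foci by the vanishing of the whole matrix $H(\nu)$.

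In every case the adapted frame condition $\nu_4\in E_p^{\perp}$ forces $m_{\nu_4}=n_{\nu_4}=0$ and $l_{\nu_4}=\kappa^{\sing}_u$; the type of $\Delta_p$ then imposes additional vanishings on the coefficients involving $\nu_2,\nu_3$. For $p\in\M_3$ the non-degeneracy of $\Delta_p$ makes the $2\times 2$ minor formed by the $m$'s and $n$'s of $\nu_2,\nu_3$ non-zero, so the system $H(\nu)=0$ has the unique solution $r_2=r_3=0$, $r_4=1/\kappa^{\sing}_u$, while $\det H(\nu)=0$ is a genuine quadric; its intersection with the degenerate cone $l_\nu n_\nu-m_\nu^2=0$ yields, after removing a spurious component, the stated line. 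For $p\in\M_2$ semiumbilic one chooses $\nu_2$ along the direction of $\Delta_p$, killing the three coefficients involving $\nu_3$; the Hessian becomes effectively a $2\times 2$ problem in $(r_2,r_4)$ and $\det H(\nu)=0$ factors to give either two transverse planes (half-line case) or all of $E_p^{\perp}$ (line case, where also $l_{\nu_2}=0$), while the combined system $H_{12}=H_{22}=H_{11}=0$ gives $r_3=1/\kappa^{\sing}_u$ with $r_4$ free. For $p\in\M_1$ the adapted frame reduces the non-zero coefficients to those involving a single $\nu_i$: in the radial half-line subcase $H_{12}\equiv H_{22}\equiv 0$ and $\det H(\nu)=0$ becomes the single linear equation $H_{11}=0$ describing two parallel planes; in the umbilic non-flat subcase the only non-zero coefficient is $l_{\nu_3}=\kappa^{\sing}_u$, so $\det H\equiv 0$ (giving the whole $N_pM^2_{\sing}$) while $H(\nu)=0$ reduces to $r_3=1/\kappa^{\sing}_u$, yielding the claimed $2$-parameter plane of umbilical foci.

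\textbf{Main obstacle.} The principal difficulty is organizational: choosing, in each case, an adapted frame that maximally simplifies the second fundamental form coefficients, and then keeping careful track of which subcase of the classification of $\Delta_p$ (non-degenerate parabola; non-radial versus radial half-line or line; point) applies. A genuinely delicate step occurs in case (i), where one must verify that the intersection of the focal quadric with the cone of degenerate directions has only the stated line as its non-trivial component rather than a higher-dimensional locus; this is achieved by carrying out the elimination explicitly and checking that the remaining algebraic components are spurious, in the sense that they either do not correspond to $\nu-p\in N_pM^2_{\sing}$ or reduce to the origin.
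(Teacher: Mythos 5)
Your strategy is the same as the paper's: write $M^2_{\sing}$ in Monge form, observe that $\Hess d_\nu$ at the corank--$1$ point depends only on the $2$-jet, cut out the focal set by $\det\Hess d_\nu=0$ and the umbilical foci by $\Hess d_\nu=0$, and run through the classification of $\Delta_p$. The only cosmetic difference is that the paper substitutes the explicit pre-normal forms of the $2$-jet from the corank--$1$ classification (one for each type of curvature parabola), whereas you keep abstract coefficients $l_{\nu_i},m_{\nu_i},n_{\nu_i}$ and impose the adapted-frame vanishings; these are interchangeable. However, three steps do not survive as written. First, the blanket claim that $\nu_4\in E_p^{\perp}$ forces $l_{\nu_4}=\kappa^{\sing}_u$ is only true at $\M_3$-points, where $E_p$ is parallel to $Aff_p$ but does not contain it. When $\Delta_p$ is a half-line, a line or a point, $E_p$ is a plane through the origin containing $Aff_p$, so $l_{\nu_4}=m_{\nu_4}=n_{\nu_4}=0$ and the umbilic curvature appears instead as $l_{\nu_3}$; your later formulas ($r_3=1/\kappa^{\sing}_u$ with $r_4$ free) implicitly use this, but then the ``effective $2\times2$ problem'' in case ii) is in $(r_2,r_3)$, not $(r_2,r_4)$.

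Second, case iii.1 is internally inconsistent: if $H_{12}\equiv H_{22}\equiv 0$ then $\det H\equiv 0$ and the focal set would be all of $N_pM^2_{\sing}$, and in any event a single linear equation $H_{11}=0$ describes one plane, not two. What actually happens (and what the paper's computation with the half-line normal form shows) is that only $H_{12}\equiv 0$, so $\det H=H_{11}H_{22}$ with $H_{22}$ proportional to $r_2$ and $H_{11}=2-2l_{\nu_2}r_2$ after radiality kills the $r_3$-term; the two parallel planes are the zero sets of these two linear forms in the single variable $r_2$. Third, the step you single out as delicate in case i) is in fact immediate with your own setup: up to constants, $\det\Hess d_\nu$ equals the degenerate-cone polynomial $l_\nu n_\nu-m_\nu^2$ minus a multiple of $n_\nu$, so on the intersection one gets $n_\nu=0$, hence $m_\nu^2=l_\nu n_\nu=0$, hence $m_\nu=0$, and the linear independence of $(m_{\nu_2},m_{\nu_3})$ and $(n_{\nu_2},n_{\nu_3})$ (non-degeneracy of the parabola) forces $r_2=r_3=0$; no spurious components need to be discarded. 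None of these is a flaw of method --- the framework is exactly the paper's --- but the case analysis must be redone with the correct adapted-frame vanishings before the statement is actually proved.
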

 \begin{proof} The first assertion that $d_\nu$ is singular if and only if $\nu\in N_pM^2_{\sing}$ is obvious.  In all the cases we assume that $p=0$ and $M^2_{\sing}$ is parametrised by $f(x,y)=(x,f_2(x,y),f_3(x,y),f_4(x,y))$ with $j^2f_i(0,0)=0$, for $i=2,3,4$. After smooth
changes of coordinates in the source and isometries in the target, the surface can be taken with one of the following parametrisations (see \cite{Benedini/Sinha/Ruas}):
\begin{itemize}
\item $(x, xy + p(x, y), b_{20}x^2 + b_{11}xy + b_{02}y^2 + q(x, y), c_{20}x^2 + r(x, y))$ iff $\Delta_p$ is a non-degenerate parabola;
\item $(x, a_{20}x^2+y^2 + p(x, y), b_{20}x^2  + q(x, y),  r(x, y))$ iff $\Delta_p$ is a half-line;
\item $(x, xy + p(x, y), b_{20}x^2  + q(x, y),  r(x, y))$ iff $\Delta_p$ is a line;
\item $(x, p(x, y), b_{20}x^2  + q(x, y),  r(x, y))$ iff $\Delta_p$ is a point,
\end{itemize}
where $a_{ij} , b_{ij} , c_{ij}\in\R$, $b_{02} > 0$ and $p, q, r\in\mathscr M_3^2$. If we denote the coordinates in $\R^4$ by $(v_1,v_2,v_3,v_4)$, in all the four cases we have that $[\nu_2,\nu_3,\nu_4] = [(0,1,0,0),(0,0,1,0),(0,0,0,1)]$ is the adapted frame of $N_pM^2_{\sing}$ and take $\nu=(v_1, v_2, v_3,v_4)$. Thus $d_\nu$ is singular at $p$ if and only if $v_1=0$.

 If $p\in \M_3$ then $\Delta_p$ is a non-degenerate parabola. The degenerate directions are given when the height function $h_\nu$ has singularity $A_2$ or worse, i.e.  are solutions of the equation
 $$
-v_2^2-2b_{11}v_2v_3-2v_2v_4+(4b_{02}b_{20}-b_{11}^2)v_3^2+(4b_{02}c_{20}-2b_{11})v_3v_4-v_4^2=0.
 $$
 Furthermore, $d_v$ is given by
 $$
 d_\nu(x,y)=(v_2-xy)^2 +(v_3-(b_{20}x^2+b_{11}xy+b_{02}y^2))^2 +(v_4-c_{20}x^2)^2 +s(x,y),
 $$
 where $s\in\mathscr M^5_2$.
  The focal set of $M^2_{\sing}$ at $p$ is given by solutions of the equation
 $$
-v_2^2-2b_{11}v_2v_3-2v_2v_4+(4b_{02}b_{20}-b_{11}^2)v_3^2+(4b_{02}c_{20}-2b_{11})v_3v_4-v_4^2-2b_{02}v_3=0
 $$
which is a quadric and it intersects the set of degenerate directions when $v_3=0$.
Moreover $d_v$ has a singularity of corank $2$ if and only if $\kappa_u^{\sing}=2c_{20}\neq0$ and $\nu=\left(0,0,0,\frac{1}{\kappa_u^{\sing}}\right)$.

Suppose that $p\in \M_2$ is semiumbilic. If $\Delta_p$ is a half-line then the degenerate directions are given by
 $v_2(a_{20}v_2+b_{20}v_3)=0$ and
 $$
d_\nu(x,y)=(v_2-(a_{20}x^2+y^2)^2 +(v_3-b_{20}x^2)^2 +v_4^2 +s(x,y),
$$
where $s\in\mathscr M^5_2$. The focal set of $M^2_{\sing}$ at $p$ is given by solutions of the equation
 $v_2(2a_{20}v_2+2b_{20}v_3-1)=0$
that  intersect the set of degenerate directions in the plane $E_p^{\perp}$ ($v_2=0$). In this case, if $\kappa_u^{\sing}=2b_{20}\neq0$ we have two transverse planes, intersecting on the line $(0, 0,\frac{1}{\kappa_u^{\sing}}, v_4)$. Otherwise, if $\kappa_u^{\sing} = 0$, then $p\in \M_1$ we have two parallel planes. In both cases, $v_2 = 0$ is one of the planes. Moreover, there is a corank 2 singularity only in the case that $\kappa_u^{\sing}\neq0$ and $\nu = \left(0, 0,\frac{1}{\kappa_u^{\sing}}, v_4\right)$. Now suppose that $\Delta_p$ is a line. The distance-squared function is given by
$$
d_\nu(x,y)=(v_2-xy)^2 +(v_3-b_{20}x^2)^2 +v_4^2 +s(x,y)
$$
where $s\in\mathscr M^5_2$. The focal set and the set of degenerate directions are equal to $E_p^{\perp}$ ($v_2=0$). Furthermore, the singularity is of corank 2 if and only if $\kappa_u^{\sing}=2b_{20}\neq0$ and $\nu = \left(0, 0,\frac{1}{\kappa_u^{\sing}}, v_4\right)$.

Finally, suppose $p\in\M_1$ is umbilic non-flat. Here $\Delta_p$ is a point different of $p$, then
$$
d_\nu(x,y)=v_2^2 +(v_3-b_{20}x^2)^2 +v_4^2 +s(x,y),
$$
where $s\in\mathscr M^5_2$. The focal set and the set of degenerate directions are equal to $N_pM$. Moreover, $d_\nu$ has a singularity of corank $2$ if and only if  $\kappa_u^{\sing}=2b_{20}\neq0$ and $\nu = \left(0, v_2,\frac{1}{\kappa_u^{\sing}}, v_4\right)$.
\end{proof}

\begin{coro}
Let $p\in M^{2}_{\reg}\subset\mathbb{R}^{5}$, $u\in T_p M^2_{\reg}$ and $\pi_u$ be the orthogonal projection such that $\pi_u(p)\in M^2_{\sing}\subset\mathbb R^4$.
\begin{itemize}
\item[i)] There exists a unique umbilical focus at $p\in M^{2}_{\reg}$ if and only if there exists a unique umbilical focus at $\pi_u(p)\in M^2_{\sing}$.
\item[ii)] The point $p\in \M_2$ is not semiumbilic and the umbilical focus of $M^{2}_{\reg}$  lies at infinity if and only if the point $\pi_u(p)\in\M_2$ is not semiumbilic and the umbilical focus of $M^{2}_{\sing}$ lies at infinity. Moreover, in both cases the umbilical focal hypersphere becomes an osculating hyperplane.
\item[iii)] The point $p\in\M_2$ is semiumbilic and there is a line of umbilical foci at $p$ in  $E_p^{\perp}$ if and only if the point $\pi_u(p)\in\M_2$ is semiumbilic and  there is a line of umbilical foci at $\pi_u(p)$ in $E_p^{\perp}$. Furthermore,  the line is a degenerate direction.
\end{itemize}
\end{coro}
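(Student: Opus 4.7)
\emph{Proof plan.} My approach combines Proposition \ref{semiumb} (which gives $p\in\M_i\iff\pi_u(p)\in\M_i$ and the equivalence of semiumbilicity) with the two explicit descriptions of umbilical foci already at our disposal: the one in \cite{Costa/Fuster/Moraes} for $M^2_{\reg}\subset\R^5$ and Theorem \ref{contactspheresing} for $M^2_{\sing}\subset\R^4$. Once the stratification and the semiumbilicity are preserved by the projection, each of (i)--(iii) becomes a one-to-one translation between the two sides within the relevant stratum.

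For part (i) I would use that by \cite{Costa/Fuster/Moraes} a unique umbilical focus on $M^2_{\reg}$ exists precisely on $\M_3$, located at $p+\frac{1}{\kappa_u^{\reg}}v_p$ with $v_p\in E_p^{\perp}$, and that Theorem \ref{contactspheresing}(i) gives the analogous statement on $\M_3$ with unique focus $\pi_u(p)+\frac{1}{\kappa_u^{\sing}}\nu_4\in E^{\perp}_{\pi_u(p)}$. Both umbilic curvatures are nonzero by Proposition \ref{semiumb}(i), so the equivalence in (i) is immediate. For (ii) and (iii) I would restrict to $p\in\M_2$ and split into the two branches $\kappa_u=0$ and $\kappa_u\neq 0$. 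The first branch corresponds to non-semiumbilic points and, on the regular side, places the umbilical focus at infinity and degenerates the focal hypersphere to an osculating hyperplane (\cite{Costa/Fuster/Moraes}); on the singular side the same conclusion is visible from the proof of Theorem \ref{contactspheresing}(ii), where the corank-$2$ singularity of $d_\nu$ is lost exactly when $\kappa_u^{\sing}\to 0$. The second branch corresponds to semiumbilic points and yields a line of umbilical foci in $E_p^{\perp}$ on the regular side (\cite{Costa/Fuster/Moraes}) and in $E^{\perp}_{\pi_u(p)}$ on the singular side (Theorem \ref{contactspheresing}(ii)). The final observation that this line is a degenerate direction is exactly the last assertion of Theorem \ref{contactspheresing}(ii). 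Proposition \ref{semiumb}(ii) then pairs the two sides and finishes (ii) and (iii).

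The most delicate point will be to verify that the spaces denoted $E_p^{\perp}\subset N_pM^2_{\reg}$ and $E^{\perp}_{\pi_u(p)}\subset N_{\pi_u(p)}M^2_{\sing}$ actually correspond under the projection $\pi_u$; without this identification the lines of foci in the two settings would not be comparable in (iii). This identification will rest on the parametric relation between $\Delta_e^5$ and $\Delta_{\pi_u(p)}$ given by \cite[Proposition 3.8]{BenediniOset} (already invoked in the proof of Theorem \ref{projgeneric}): after identifying the relevant portions of the normal bundles via $\pi_u$, the two affine spans $Aff_p$ and $Aff_{\pi_u(p)}$ are parallel in a common ambient plane, hence their orthogonal complements agree. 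With this identification established, the three statements of the corollary follow from the preceding results by direct inspection.
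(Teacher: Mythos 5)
Your proposal is correct and follows essentially the same route as the paper: the corollary is deduced by pairing the description of umbilical foci of $M^2_{\reg}\subset\R^5$ from \cite{Costa/Fuster/Moraes} with Theorem \ref{contactspheresing}, using the correspondence of strata and of semiumbilicity provided by Proposition \ref{semiumb}, together with the osculating-hyperplane interpretation in the focus-at-infinity case. The only slip is a pointer: on the singular side the non-semiumbilic $\M_2$ case (non-degenerate curvature parabola with $\kappa_u^{\sing}=0$) is read off from the computation in the proof of part (i) of Theorem \ref{contactspheresing} (the subcase $c_{20}=0$), not from part (ii), but this does not affect the argument.
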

\begin{proof}
The proof follows from Theorem \ref{contactspheresing} and the above comments. In case ii), in both cases there is no proper umbilical focal hypersphere, but the height function in the direction $\nu$ has a corank 2 singularity at $p$ and $\pi_u(p)$, and hence, the hyperplane with orthogonal direction $\nu$ is an osculating hyperplane (with corank 2 contact) that can be considered as a degenerate umbilical focal hypersphere.
\end{proof}


\section{Surfaces in $\mathbb R^5$ as normal sections of 3-manifolds in $\mathbb R^6$}\label{sec6}

Following \cite{BenediniOset2},
if $M^3_{\reg}\subset \mathbb R^6$ is given in Monge form by
$$
(x,y,z,f_1(x,y,z),f_2(x,y,z),f_3(x,y,z)),
$$
let $u=(0,0,1)\in T_p M^3_{\reg}$ and let $\pi_u$ be the projection along the direction $u$. Consider the normal section given by $\{Y=0\}$. Let $i_1, i_2$ be the immersions of the normal sections in $\mathbb R^6$ and $\mathbb R^5$ respectively. Let $u'=i_{1_*}^{-1}(u)=(0,1)\in T _{i_1^{-1}(p)}M^2_{\reg}$. Then the following diagram is commutative
\begin{equation*}
\begin{CD}
M^3_{\reg}\subset\mathbb R^6 @>{\pi_u}>> M^3_{\sing}\subset\mathbb R^5\\ @A{i_1}AA        @AA{i_2}A   \\
M^2_{\reg}\subset\mathbb R^5 @>>{\pi_{u'}}> M^2_{\sing}\subset\mathbb R^4
\end{CD}
\end{equation*}
where $M^2_{\reg}=M^3_{\reg}\cap\{Y=0\}$ and $M^3_{\sing}, M^2_{\sing}$ are the corresponding singular projections, and induces a commutative diagram amongst the curvature loci of the four manifolds.

\begin{teo}
Suppose that $q\in \M_2\subset M_{\reg}^2\in\R^5$ is not semiumbilic and suppose that $u'\in T _{q}M^2_{\reg}$ is not an $A_k$-asymptotic direction.
The following are equivalent:
\begin{itemize}
\item[i)] $v$ is $A_k$-asymptotic at $q$
\item[ii)] $v$ is asymptotic at $\pi_{u'}(q)$
\item[iii)] $i_2^*(v)$ is asymptotic at $i_2(\pi_{u'}(q))=\pi_u(i_1(q))$
\item[iv)] $i_1^*(v)$ is asymptotic at $i_1(q)$
\end{itemize}
\end{teo}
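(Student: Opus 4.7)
The plan is to close a cycle around the commutative diagram, proving the three equivalences (i) $\Leftrightarrow$ (ii), (iii) $\Leftrightarrow$ (iv), and (i) $\Leftrightarrow$ (iv); the remaining equivalence then follows by transitivity.

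For the bottom arrow, (i) $\Leftrightarrow$ (ii) is an immediate application of Theorem \ref{projgeneric}. Because $q\in\M_2$ is not semiumbilic, Proposition \ref{semiumb} forces the curvature ellipse $\Delta_e^5$ at $q$ to be non-degenerate, so $q$ lies in $\M_2^h\cup\M_2^e\cup\M_2^p$; the hypothesis that $u'$ is not itself $A_k$-asymptotic then ensures that $\pi_{u'}(q)\in M^2_{\sing}$ has the generic corank-1 singularity with non-degenerate curvature parabola, and Theorem \ref{projgeneric} matches the $A_k$-asymptotic directions at $q$ with the asymptotic directions at $\pi_{u'}(q)$.

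For the top arrow, (iii) $\Leftrightarrow$ (iv) is the analogous statement one dimension up and follows by the same mechanism, now applied to 3-manifolds. Since $\pi_u$ is the orthogonal projection along the tangent direction $u\in T_{i_1(q)}M^3_{\reg}$, the second fundamental form and the height functions $h_{\tilde\nu}$ on $M^3_{\reg}$ agree with those on $M^3_{\sing}=\pi_u(M^3_{\reg})$ modulo the killed $u$-coordinate, as in the treatment of projections of surfaces transferred from \cite{BenediniOset2}. Hence a normal direction $\tilde\nu$ is binormal for $w\in T_{i_1(q)}M^3_{\reg}$ if and only if it is binormal for $(\pi_u)_*w$ at $\pi_u(i_1(q))\in M^3_{\sing}$, and taking $w=i_1^*(v)$ with $(\pi_u)_*i_1^*(v)=i_2^*(v)$ by commutativity yields the equivalence.

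The harder case is (i) $\Leftrightarrow$ (iv), which relates $A_k$-asymptotic directions on the normal section $M^2_{\reg}\subset\R^5$ to ordinary (second-order) asymptotic directions on the ambient $M^3_{\reg}\subset\R^6$. Because $M^2_{\reg}=M^3_{\reg}\cap\{Y=0\}$, the second fundamental form of $M^2_{\reg}$ at $q$ is the restriction of that of $M^3_{\reg}$ at $i_1(q)$ to the subspace $T_qM^2_{\reg}\subset T_{i_1(q)}M^3_{\reg}$. Given a binormal $\tilde\nu\in N_{i_1(q)}M^3_{\reg}$ for $i_1^*(v)$, restriction yields a degenerate $\nu\in N_qM^2_{\reg}$ with $v$ as contact direction; the extra annihilation along the tangent direction $u_y$ normal to $T_qM^2_{\reg}$ inside $T_{i_1(q)}M^3_{\reg}$ must then be shown to force $\nu\in E_q$, so that by Remark \ref{rem1} $v$ is $A_k$-asymptotic at $q$. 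The converse is obtained by extending a binormal $\nu\in E_q$ back to $\tilde\nu$ at $i_1(q)$ and verifying the $u_y$-annihilation in Monge form. The main obstacle I anticipate is exactly this last verification, namely showing that the $u_y$-constraint pins down the $E_q$-binormals among all degenerate directions in the cone $\mathcal C_q$; this should be resolved by a case analysis on the three $\M_2$-subtypes using the Monge normal forms of Section 4 together with the non-semiumbilicity of $q$ and the genericity of $u'$.
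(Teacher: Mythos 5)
Your overall architecture differs from the paper's: you close the square via (i)$\Leftrightarrow$(ii), (iii)$\Leftrightarrow$(iv) and (i)$\Leftrightarrow$(iv), deducing (ii)$\Leftrightarrow$(iii) by transitivity, whereas the paper proves (ii)$\Leftrightarrow$(iii) directly --- using that non-semiumbilicity of $q$ plus non-asymptoticity of $u'$ force the curvature parabola at $\pi_{u'}(q)$ to be non-degenerate, hence $Aff_{\pi_{u'}(q)}=E_{\pi_{u'}(q)}$, and then quoting Theorem 5.9 of \cite{BenediniOset2} --- and gets (i)$\Leftrightarrow$(iv) from the tangency characterisation of Proposition \ref{lemma1} combined with that same theorem. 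Your treatments of (i)$\Leftrightarrow$(ii) and (iii)$\Leftrightarrow$(iv) coincide with the paper's.

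The genuine gap is the link (i)$\Leftrightarrow$(iv): you correctly single it out as the hard step, but you do not carry it out, and the statement you propose to verify there --- that the extra annihilation along $u_y$ forces the witnessing normal direction into $E_q$ --- is false as formulated. Identifying $N_qM^2_{\reg}$ with $N_{i_1(q)}M^3_{\reg}$, the direction $\tilde\nu$ witnessing (iv) can perfectly well be the flat umbilic direction in $E_q^{\perp}$. Concretely, in the paper's own example with $2$-jet $(x,y,z,x^2+\tfrac12 z^2,xz,yz)$ and section $\{y=0\}$, take $v=(1,0)$ and $\nu=(0,0,1)$, which spans $E_q^{\perp}$: then $II_\nu=\Hess(yz)$ annihilates $(1,0,0)=i_1^*(v)$ against every tangent vector of the $3$-manifold, so $i_1^*(v)$ is asymptotic at $i_1(q)$ in the sense of Section \ref{prelim}, while the $A_k$-asymptotic directions at $q$ are only $(1,\pm\sqrt2)$. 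In general the asymptotic directions of $M^3_{\reg}$ lying in $T_qM^2_{\reg}$ are cut out by a cubic, the $A_k$-asymptotic directions of the section by a quadratic, and the discrepancy is exactly the root dual to the flat umbilic; any Monge-form case analysis must therefore explicitly exclude the flat-umbilic binormal rather than merely ``verify'' that $\nu\in E_q$. The paper avoids the pointwise annihilation bookkeeping altogether by passing through the curvature loci: $\Delta_e^5$ sits inside the planar curvature locus of $M^3_{\reg}$, a direction is $A_k$-asymptotic iff its image under the second fundamental form is tangent to $\Delta_e^5$ (Proposition \ref{lemma1}), and Theorem 5.9 of \cite{BenediniOset2} converts that tangency into asymptoticity upstairs. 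Until you either adopt that route or add the flat-umbilic exclusion to your intended case analysis, your proof of (i)$\Leftrightarrow$(iv), and hence of the theorem, is incomplete.
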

\begin{proof}
Equivalence between i) and ii) is proven in Theorem \ref{projgeneric}. Since $q$ is not semiumbilic the curvature ellipse at $q$ is non degenerate, and since $u'$ is not an $A_k$-asymptotic direction, $\pi_{u'}(q)$ is an $I_k$ singularity. Therefore the curvature parabola at $\pi_{u'}(q)$ is non degenerate and by Proposition \ref{semiumb} $\pi_{u'}(q)$ is not semiumbilic. Thus we have $Aff_{\pi_{u'}(q)}=E_{\pi_{u'}(q)}$ and by Theorem 5.9 in \cite{BenediniOset2} we get equivalence between ii) and iii). Equivalence between i) and iv) follows similarly to Theorem 5.9 in \cite{BenediniOset2} since by the Proposition \ref{lemma1} the image of the asymptotic directions is tangent to the curvature locus. Equivalence between iii) and iv) follows from the fact that the second fundamental forms are the same and the condition for being an asymptotic direction depends on $A_2$-singularities of the height function in the dual direction in both cases.
\end{proof}

\begin{ex}
Consider a 3-manifold whose 2-jet is given by $(x,y,z,x^2+1/2z^2,xz,yz)$ and consider the projection in the tangent direction $(0,0,1)$ and the normal section given by $\{Y=0\}$. $(1,\pm\sqrt{2})$ is asymptotic at $q$ and $\pi_{u'}(q)$ and $(1,0,\pm\sqrt{2})$ is asymptotic at $i_2(\pi_{u'}(q))=\pi_u(i_1(q))$ and at $i_1(q)$.
\end{ex}

For $p\in M^3_{\reg}\subset \mathbb R^6$ the matrix of the second fundamental form with respect to the frame given in Section \ref{prelim} is
$$
\alpha_p=\left(
         \begin{array}{cccccc}
           a_{1} & b_{1} & c_{1} & d_1 & r_1 & s_1 \\
          a_{2} & b_{2} & c_{2} & d_2 & r_2 & s_2 \\
          a_{3} & b_{3} & c_{3} & d_3 & r_3 & s_3
         \end{array}
       \right).
$$
In
\cite{Binotto/Costa/Fuster} it is shown that the curvature locus of $M^{3}_{\reg}$
at $p$ can be parametrised by
$\eta:S^{2}\subset T_pM^{3}_{\reg}\rightarrow N_pM^{3}_{\reg},\ (\theta,\phi)\mapsto\eta(\theta,\phi)$,
where
$$
\begin{array}{cl}
    \eta(\theta,\phi)= & H+(1+3\cos(2\phi))B_1+\cos(2\theta)\sin^2\theta B_2 \\
     & +\sin(2\theta)\sin^2\phi B_3+cos\theta\sin(2\phi)B_4+\sin\theta\sin(2\phi)B_5
\end{array}
$$
with
$$
\begin{array}{c}
     H=\frac{1}{3}\sum_{i=1}^3(a_i+d_i+s_i)e_{i+3},\ B_1=\frac{1}{12}\sum_{i=1}^3(-a_i-d_i+2s_i)e_{i+3},  \\
     B_2=\frac{1}{2}\sum_{i=1}^3(a_i-d_i)e_{i+3},\ B_3=\sum_{i=1}^3(b_i)e_{i+3},\\
      B_4=\sum_{i=1}^3(c_i)e_{i+3},\ B_5=\sum_{i=1}^3(r_i)e_{i+3}.
\end{array}
$$

The first normal space
is $N_p^1M^{3}_{\reg}=\langle H,B_1,B_2,B_3,B_4,B_5\rangle_{(p)}$. The spaces $Aff_p$ and $E_p$ are defined as in all other regular cases. In fact, $E_p=\langle B_1,B_2,B_3,B_4,B_5\rangle_{(p)}$.
In \cite{Binotto/Costa/Fuster}, it is shown that, generically, $\dim (N_p^1M^{3}_{\reg})=3$, and that if $\dim E_p=2$ (i.e. $H\notin E_p$) the curvature locus is a planar region contained in the plane $Aff_p$. This motivates the following.
\begin{definition}
If $\dim E_p=2$ we define the \emph{umbilic curvature} at a point $p\in M^{3}_{\reg}\subset \mathbb R^6$ as $$\kappa_u^6(p)=d(p,Aff_p).$$
\end{definition}
\begin{rem}
The vector $H$ is called the mean curvature vector and lies in $Aff_p$. If we denote by $\nu$ the unit vector orthogonal to $E_p$, then $\kappa_u^6(p)=\langle H,\nu\rangle.$
\end{rem}
From now on we shall denote by $\kappa_u^5$ the umbilic curvature of $M^{2}_{\reg}\subset \mathbb R^5$.
\begin{prop}\label{sameku}
Consider $p\in M^3_{\reg}\subset \mathbb R^6$ and $q\in M^{2}_{\reg}\subset \mathbb R^5$ obtained as a normal section. Suppose $\dim E_p=2$ and that $q$ is not semiumbilic, then $\kappa_u^6=\kappa_u^5$.
\end{prop}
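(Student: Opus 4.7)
The plan is to identify the relevant normal spaces and affine hulls so that the equality becomes a statement about two planes in a common three-dimensional space coinciding. First I would work in the Monge form used in Section \ref{sec6}: write $M^3_{\reg}$ as the graph $(x,y,z,f_1(x,y,z),f_2(x,y,z),f_3(x,y,z))$ at $p$, so that the normal section $M^2_{\reg}=M^3_{\reg}\cap\{Y=0\}$ sits inside the hyperplane $\{Y=0\}\simeq\mathbb R^5$. At $p=0$, the space $N_pM^3_{\reg}$ is spanned by the three ``graph'' coordinate directions, all of which lie in $\{Y=0\}$, while $T_qM^2_{\reg}$ is spanned by $\partial_x,\partial_z$. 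Taking the orthogonal complement of $T_qM^2_{\reg}$ inside $\{Y=0\}$ recovers exactly $N_pM^3_{\reg}$, so $N_qM^2_{\reg}=N_pM^3_{\reg}$.

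Next, since $T_qM^2_{\reg}\subset T_pM^3_{\reg}$ and the projections onto the common normal space agree, the second fundamental form $II_q$ of the normal section is simply the restriction of $II_p$ to $T_qM^2_{\reg}\times T_qM^2_{\reg}$. Thus the curvature ellipse $\Delta_e^5$ at $q$ is the image under $II_p$ of the unit circle in $T_qM^2_{\reg}$, which is a great circle of the unit sphere in $T_pM^3_{\reg}$. In particular $\Delta_e^5\subset \Delta_v$, which yields the affine-hull inclusion $Aff_q\subset Aff_p$.

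To conclude I invoke the hypotheses: $\dim E_p=2$ means $Aff_p$ is a plane in $N_pM^3_{\reg}$, and $q$ not being semiumbilic means $\Delta_e^5$ is a non-degenerate ellipse, so $Aff_q$ is also a plane in $N_qM^2_{\reg}$. Two planes, one contained in the other, inside the same three-dimensional space $N_pM^3_{\reg}=N_qM^2_{\reg}$, must coincide. Since both umbilic curvatures are the distance from the base point (viewed as origin of the respective normal space) to this common plane, we obtain
$$
\kappa_u^6=d(p,Aff_p)=d(q,Aff_q)=\kappa_u^5.
$$
The main subtlety is verifying $N_qM^2_{\reg}=N_pM^3_{\reg}$; this relies on the normal section being taken along a hyperplane containing all of $N_pM^3_{\reg}$, which is precisely what the Monge-form setup guarantees. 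Everything else is a straightforward consequence of the inclusion of curvature loci and a dimension count.
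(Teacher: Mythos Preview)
Your argument is correct and follows essentially the same route as the paper: both show that $\Delta_e^5\subset\Delta_v\subset Aff_p$, use the non-semiumbilic hypothesis to conclude $Aff_q$ is a plane, and then deduce $Aff_q=Aff_p$ by a dimension count, whence the two distances agree. The only difference is packaging: the paper cites \cite{BenediniOset2} for the inclusion of curvature loci, whereas you unpack it directly by identifying $N_qM^2_{\reg}$ with $N_pM^3_{\reg}$ in Monge form and observing that $II_q$ is the restriction of $II_p$.
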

\begin{proof}
In \cite{BenediniOset2} the authors show that the curvature locus of the 3-manifold can be generated by the curvature loci of all the possible normal sections. If $\dim E_p=2$, the curvature locus of the 3-manifold is a planar region contained in the plane $Aff_p$, therefore the curvature locus of any normal section will be a curve contained in the same plane. Since $q$ is not semiumbilic, $\Delta_e^5$ is non-degenerate and so $Aff_q=Aff_p$. The result follows.
\end{proof}

\subsection{Contact with spheres}

As we did for singular surfaces in $\mathbb R^4$, we consider the family of distance squared functions on $M^3_{\reg}$
$$
\begin{array}{rcl}
D : M^3_{\reg} \times \mathbb R^6 &\to& \R \\
(p, a)&\mapsto& ||p-a||^2 = d_a(p).\\
\end{array}
$$
Following \cite{Binotto/Costa/Fuster}, a point in the focal set (points where the distance squared function has a degenerate singularity) where $\Hess d_a$ has corank 3 is called an umbilical focus. For a generic 3-manifold the rank of the second fundamental form is 3 at every point and the umbilical foci lie at isolated points where the curvature locus is of planar type.

\begin{prop}\label{contact3var}
Let $p\in M^3_{\reg}$ be a point in a generic 3-manifold and suppose $\dim E_p=2$ (i.e. the curvature locus is a planar region), then there exists a unique umbilical focus at $p$ given by $$a=p+\frac{1}{\kappa_u^6}v$$ where $v$ is a unit vector in $E_p^{\perp}$.
\end{prop}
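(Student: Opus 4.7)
The plan is to imitate the classical corank-3 analysis for focal points, but adapted to the parametrisation of the curvature locus given before the proposition. First I would fix a Monge parametrisation at $p$ so that the canonical tangent basis is orthonormal, and recall that $d_a$ is singular at $p$ if and only if $a-p\in N_pM^3_{\reg}$. Writing $a=p+t\nu$ with $\nu$ a unit normal, a direct second-derivative computation of $d_a(u)=\|f(u)-a\|^2$ gives
\begin{equation*}
\Hess d_a(p)\bigl(u,u\bigr)=2\bigl(\|u\|^2 - t\,II_\nu(p)(u,u)\bigr),
\end{equation*}
so the umbilical focus condition (corank $3$) is equivalent to $II_\nu=\tfrac{1}{t}\,\mathrm{Id}$, i.e.\ $II_\nu(u,u)=\tfrac{1}{t}$ for every unit $u\in T_pM^3_{\reg}$.

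Next I would translate this pointwise statement into a condition on the curvature locus. Since $II_p(u,u)=\eta(\theta,\phi)$ when $u$ is the unit tangent vector corresponding to $(\theta,\phi)$, the condition becomes
\begin{equation*}
\langle \eta(\theta,\phi),\nu\rangle=\frac{1}{t}\qquad\text{for all }(\theta,\phi)\in S^2,
\end{equation*}
which says exactly that the whole curvature locus $\Delta_v$ is contained in the affine hyperplane $\{q\in N_pM^3_{\reg}:\langle q,\nu\rangle=1/t\}$.

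Now I invoke the hypothesis $\dim E_p=2$. The minimal affine subspace containing $\Delta_v$ is $Aff_p$, which is a $2$-plane by assumption, so for $\Delta_v$ to lie in an affine hyperplane with normal $\nu$, the direction $\nu$ must be orthogonal to $E_p$. Because $N_pM^3_{\reg}$ is $3$-dimensional and $E_p$ is $2$-dimensional, $E_p^{\perp}$ is a line, hence $\nu=\pm v$. Choosing the orientation so that $\langle q,v\rangle=\kappa_u^6$ for every $q\in Aff_p$ (this is $d(p,Aff_p)$ by definition of $\kappa_u^6$), the equation $\langle q,\nu\rangle=1/t$ gives $t=1/\kappa_u^6$ when $\nu=v$ and $t=-1/\kappa_u^6$ when $\nu=-v$; in either case $a=p+t\nu=p+\tfrac{1}{\kappa_u^6}v$, proving both existence and uniqueness.

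The only slightly subtle step is the transition from the algebraic condition $II_\nu=\tfrac{1}{t}\mathrm{Id}$ to the geometric statement that $\Delta_v$ lies in a hyperplane, together with the dimension count showing that $\dim E_p=2$ forces $\nu\in E_p^{\perp}$. Everything else is either the standard Hessian computation for the squared distance function or the definition of $\kappa_u^6$; the genericity hypothesis is used only implicitly, to ensure that we are in the setting where $\alpha_p$ has maximal rank and the focal point really corresponds to a corank-$3$ degeneracy rather than to a lower-rank situation.
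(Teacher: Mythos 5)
Your proof is correct and follows essentially the same route as the paper: both reduce the corank-$3$ condition to the vanishing of $\mathrm{Id}-t\,II_\nu$ at $p$ and then identify when this forces $\nu\in E_p^{\perp}$ and $t=1/\kappa_u^6$. The only difference is presentational: where the paper checks the six vanishing conditions explicitly against the vectors $B_1,\dots,B_5$ and $H$, you rephrase them as the curvature locus lying in an affine hyperplane and invoke the minimality of $Aff_p$, which is an equivalent (and slightly more coordinate-free) way of reaching the same conclusion.
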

\begin{proof}
For $v=v_4e_4+v_5e_5+v_6e_6\in N_pM^3_{\reg}$ and $a=p+\lambda v$ we have
$$
\Hess d_a=\left(
         \begin{array}{ccc}
          1-\lambda\sum_{i=1}^3a_iv_i & \lambda\sum_{i=1}^3b_iv_i & \lambda\sum_{i=1}^3c_iv_i  \\
          \lambda\sum_{i=1}^3b_iv_i & 1-\lambda\sum_{i=1}^3d_iv_i & \lambda\sum_{i=1}^3r_iv_i  \\
          \lambda\sum_{i=1}^3c_iv_i & \lambda\sum_{i=1}^3r_iv_i & 1-\lambda\sum_{i=1}^3s_iv_i
         \end{array}
       \right).
$$
So corank $\Hess d_a$ is 3 if and only if $$1-\lambda\sum_{i=1}^3a_iv_i=1-\lambda\sum_{i=1}^3d_iv_i=1-\lambda\sum_{i=1}^3s_iv_i=0,$$ $$\sum_{i=1}^3b_iv_i=\sum_{i=1}^3c_iv_i=\sum_{i=1}^3r_iv_i=0.$$ This is equivalent to $$\sum_{i=1}^3(a_i-d_i)v_i=\sum_{i=1}^3(-a_i-d_i+2s_i)v_i=\sum_{i=1}^3b_iv_i=\sum_{i=1}^3c_iv_i=\sum_{i=1}^3r_iv_i=0,$$ which is equivalent to $$\langle v,B_j\rangle=0$$ for $j=1,\ldots,5$. This means that $v$ is in $E_p^{\perp}$.

Furthermore, we get $$\frac{1}{\lambda}=\frac{1}{3}\sum_{i=1}^3(a_i+d_i+s_i)v_i=\langle H,v\rangle,$$ so $\lambda=\frac{1}{\langle H,v\rangle}=\frac{1}{\kappa_u^6}.$
\end{proof}

\begin{coro}
Let $p\in M^3_{\reg}$ be a point in a generic 3-manifold and consider $q\in M^2_{\reg}\subset \R^5$ given by a generic normal section. There exists a unique umbilical focus at $p$ if and only if there exists a unique umbilical focus at $q=i_1^{-1}(p)$. Moreover, the radii of the umbilical focal hyperspheres are the same.
\end{coro}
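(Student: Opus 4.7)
My plan is to combine the two explicit characterizations of unique umbilical foci already at hand with Proposition \ref{sameku}. On the $3$-manifold side, Proposition \ref{contact3var} tells us that a unique umbilical focus at $p\in M^3_{\reg}\subset\R^6$ exists if and only if $\dim E_p=2$, in which case it is located at $p+\frac{1}{\kappa_u^6}v$ with $v$ a unit vector in $E_p^{\perp}$; in particular, the associated focal hypersphere has radius $1/\kappa_u^6$. On the surface side, the characterization recalled at the start of Section $5$ (following \cite{Costa/Fuster/Moraes}) gives that a unique umbilical focus at $q\in M^2_{\reg}\subset\R^5$ exists if and only if $q\in\M_3$, in which case the focus is $q+\frac{1}{\kappa_u^5}v_q$ with $v_q\in E_q^{\perp}$ unit, so that its focal hypersphere has radius $1/\kappa_u^5$.

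Next I would translate these two conditions through the normal section. The induced commutative diagram at the level of curvature loci recalled at the opening of Section \ref{sec6} (from \cite{BenediniOset2}) tells us that the curvature locus of the section $M^2_{\reg}$ at $q$ sits inside the curvature locus of $M^3_{\reg}$ at $p$, and hence $Aff_q \subseteq Aff_p$. If $\dim E_p = 2$, then $Aff_p$ is a plane not passing through $p$, and for a generic normal section the induced curve fills this plane, so $Aff_q = Aff_p$ is a plane avoiding $q$; therefore $q\in\M_3$. For the converse, if $q\in\M_3$, then $Aff_q$ is a $2$-plane missing $q$, and genericity of the normal section forces this plane to coincide with $Aff_p$, which forces $\dim E_p = 2$. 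This establishes the biconditional.

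The equality of radii then follows immediately from Proposition \ref{sameku}: since $q\in\M_3$ is not semiumbilic and $\dim E_p=2$, that proposition gives $\kappa_u^6 = \kappa_u^5$, so $1/\kappa_u^6 = 1/\kappa_u^5$. The step I expect to be most delicate is the converse direction of the biconditional, where one needs to argue that, for a generic choice of the hyperplane defining the normal section, a planar $Aff_q$ cannot be embedded inside a genuinely $3$-dimensional $Aff_p$; this should follow from the genericity assumptions of \cite{Binotto/Costa/Fuster} applied jointly with the commutativity at the level of curvature loci.
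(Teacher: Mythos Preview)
Your approach is essentially the same as the paper's: both combine Proposition \ref{contact3var} (unique umbilical focus at $p$ iff the curvature locus is planar with $Aff_p\not\ni p$), the characterization from \cite{Costa/Fuster/Moraes} (unique umbilical focus at $q$ iff $q\in\M_3$), the fact that a generic normal section yields a non-degenerate ellipse lying in $Aff_p$ so that $Aff_q=Aff_p$, and Proposition \ref{sameku} for the equality of radii. You are more explicit than the paper in flagging the converse implication (that $q\in\M_3$ forces $\dim E_p=2$ under genericity) as the delicate step; the paper handles this tersely by invoking the genericity hypotheses from \cite{Binotto/Costa/Fuster}, and your proposed justification along the same lines is adequate.
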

\begin{proof}
By Proposition \ref{contact3var}, for 3-manifolds we have a unique umbilical focus if and only if the curvature locus is a planar region and $Aff_p$ does not contain $p$. Taking a generic normal section we obtain a surface in $\R^5$ whose curvature ellipse lies in the plane $Aff_p$. Since the section is generic, the ellipse will be non-degenerate (taking a non generic normal section could make $q$ a semiumbilic point). Therefore $Aff_p=Aff_q$ and $q$ is an $\M_3$-point. From \cite{Costa/Fuster/Moraes} we know that at $\M_3$-points (and only at these points) there is a unique umbilical focus. The statement about the radii follows from Proposition \ref{sameku}.
\end{proof}


\end{document}